\tikzset{node distance=1cm, bend angle=20,
vertex/.style={circle,minimum size=2mm,very thick, draw=black, fill=black, inner sep=0mm}, information text/.style={inner sep=1ex, font=\Large}, help lines/.style={-,color=black, >=stealth', shorten <=.5pt, shorten >=.5pt}, blue help lines/.style={help lines,color=darkblue}, red help lines/.style={help lines, color=darkred},
>={[scale=1.1]Stealth}}
\definecolor{mLightBrown}{HTML}{EB811B}
\let\oldbibliography\thebibliography
\renewcommand{\thebibliography}[1]{%
  \oldbibliography{#1}%
  \setlength{\itemsep}{2pt}%
}
\newtheoremstyle{prime}%
 {\item[\hskip\labelsep \theorem@headerfont ##1\ \theorem@separator]}%
{\item[\hskip\labelsep \theorem@headerfont ##1\ ##3' \theorem@separator]}
\newtheoremstyle{proofof}
{\item[\hskip\labelsep \theorem@headerfont ##1\ \theorem@separator]}%
{\item[\hskip\labelsep \theorem@headerfont ##1\ ##3\theorem@separator]}
\newtheoremstyle{restated}
  {\item[\hskip\labelsep \theorem@headerfont ##1\ ##2\theorem@separator]}%
  {\item[\hskip\labelsep {\theorem@headerfont ##1\ ##2}{\normalfont\ (##3)}{\theorem@headerfont
  \theorem@separator}]}
\newtheoremstyle{nonumberrestated}%
  {\item[\theorem@headerfont \hskip\labelsep ##1\theorem@separator]}%
  {\item[\hskip\labelsep \theorem@headerfont ##3\theorem@separator]}%
\newtheorem{theorem}{Theorem}
\newtheorem{lemma}[theorem]{Lemma}
\newtheorem{proposition}[theorem]{Proposition}
\newtheorem{claim}{Claim}
\theoremstyle{prime}
\theoremstyle{restated}
\newtheorem*{restatedtheorem*}{}
\def \QD1 {\hfill $\spadesuit$}
\newcommand{\overr}[1]{\overset{\text{\tiny$\bm\rightarrow$}}{#1}}
\numberwithin{equation}{section}
\theoremstyle {nonumberplain}
\newtheorem{proof}{Proof}
\theoremstyle{proofof}
\newtheorem{proofof}{Proof of}
\newtheorem{proof2}{Proof}
\newcommand{\jou}[6]{\textsc{#1,} \emph{#2,} #3, #4 (#5), pp. #6.}
\newcommand{\elecjou}[6]{\textsc{#1,} \emph{#2,} #3, #4 (#5), #6.}
\def \JCTB {J. Combin. Theory \, Ser.~B}
\begin{document}
\title{\bf Digraphs and variable degeneracy}

\author{J{\o}rgen Bang-Jensen\thanks{Research supported by the Independent Research Fond Denmark under grant number DFF 7014-00037B}\\
\small IMADA\\[-0.8ex]
\small University of Southern Denmark\\[-0.8ex] 
\small Campusvej 55, DK-5320 Odense M, Denmark\\
\small\tt \href{mailto:jbj@imada.sdu.dk}{jbj@imada.sdu.dk}\\
\and
Thomas Schweser\footnotemark[1] \qquad  Michael Stiebitz\footnotemark[1]\\
\small Institute of Mathematics\\[-0.8ex]
\small Technische Universit\"at Ilmenau\\[-0.8ex]
\small D-98684 Ilmenau, Germany\\
\small\tt \{\href{mailto:thomas.schweser@tu-ilmenau.de}{thomas.schweser}, \href{mailto:michael.stiebitz@tu-ilmenau.de}{michael.stiebitz}\}@tu-ilmenau.de}

\date{}
\maketitle

\begin{abstract}
Let $D$ be a digraph, let $p \geq 1$ be an integer, and let $f: V(D) \to \mathbb{N}_0^p$ be a vector function with $f=(f_1,f_2,\ldots,f_p)$. We say that $D$ has an $f$-partition if there is a partition $(D_1,D_2,\ldots,D_p)$ into induced subdigraphs of $D$ such that for all $i \in [1,p]$, the digraph $D_i$ is weakly $f_i$-degenerate, that is, in every non-empty subdigraph $D'$ of $D_i$ there is a vertex $v$ such that $\min\{d_{D'}^+(v), d_{D'}^-(v)\} < f_i(v)$. In this paper, we prove that the condition $f_1(v) + f_2(v) + \ldots + f_p(v) \geq \max \{d_D^+(v),d_D^-(v)\}$ for all $v \in V(D)$ is almost sufficient for the existence of an $f$-partition and give a full characterization of the bad pairs $(D,f)$. Moreover, we describe a polynomial time algorithm that (under the previous conditions) either verifies that $(D,f)$ is a bad pair or finds an $f$-partition. Among other applications, this leads to a generalization of Brooks' Theorem as well as the list-version of Brooks' Theorem for digraphs, where a coloring of digraph is a partition of the digraph into acyclic induced subdigraphs. We furthermore obtain a result bounding the $s$-degenerate chromatic number of a digraph in terms of the maximum of maximum in-degree and maximum out-degree.
\end{abstract}

\noindent{\small{\bf AMS Subject Classification:} 05C20 }

\noindent{\small{\bf Keywords:} Digraph coloring, Digraph degeneracy, Brooks' theorem, {acyclic digraph}}

\bigskip
\noindent Most of our terminology is defined as in \cite{BaGu08} and similar to the papers~\cite{BaBeSchStEJC,BaBeSchStJGT}  (see also Section~\ref{section_definitions}). Throughout this paper, for $0 \leq \ell \leq k$, let $[\ell,k]=\{i \in \mathbb{N}_0 ~|~ \ell \leq i \leq k\}$. 

\section{Introduction}
Even though most people would define a $k$-coloring of an (undirected) graph $G$ as a function that assigns colors from a color set of cardinality $k$ to the vertices of $G$ such that the same-colored vertices induce edgeless subgraphs of $G$, this is nothing else than a partition of $G$ into vertex disjoint induced subgraphs $(G_1,G_2,\ldots,G_k)$ such that $G_i$ is edgeless for all $i \in [1,k]$. Naturally, both ways how to regard a coloring have their benefits. In this paper, we examine a more general approach regarding the latter definition that will allow us to obtain various well-known coloring results. But first of all, we need to clarify what digraph coloring refers to. A \textbf{coloring} and $k$\textbf{-coloring} of a digraph $D$ is a function $\varphi:V(D) \to [1,k]$ such that each color class $\varphi^{-1}(i)$ induces an acyclic subdigraph of $D$, that is, a subdigraph that does not contain any directed cycle. The \textbf{dichromatic number} $\overr{\chi}(D)$ of a digraph $D$ is the least integer $k$ such that $D$ admits a $k$-coloring. This digraph coloring concept was originally introduced by Neumann-Lara~\cite{NeuLa82} in 1982; however, it took over twenty years until it was rediscovered by Mohar~\cite{Mo03} in 2003. Ever since, it has attracted much attention amongst graph theorists (see, e.g., \cite{AhBeKf08, ArOl10, BaBeSchStEJC, BaBeSchStJGT,Gol16,Ha11, HaMo11.2, HaMo11, HaMo12,HoKa14, Mo04, Mo10}). 
Although this coloring concept might not seem intuitive at first sight, there are various factors stressing why it is especially reasonable. First of all, the dichromatic number of a bidirected graph and the chromatic number of its underlying (undirected) graph coincide. Consequently, many theorems on digraph coloring are generalizations of theorems on coloring of undirected graphs. Moreover, it has been shown that plenty of well-known theorems in graph coloring indeed have digraph counterparts. For example, Harutyunyan and Mohar~\cite{HaMo12} proved that there exist digraphs $D$ of maximum total degree $\Delta$ and arbitrary large digirth such that $\overr \chi(D) \geq \frac{c \Delta}{\log \Delta}$ for some constant $c$, thereby obtaining the analogue of a famous result of Bollob\'as~\cite{Bol78}, respectively Kostochka and Mazurova~\cite{KosMaz77}. Moreover, Andres and Hochst\"attler~\cite{AnHo15} obtained the digraph analogue of Chudnovsky, Robertson, Seymour, and Thomas' celebrated Strong Perfect Graph Theorem~\cite{CRST06}. In this paper, we will mainly focus on the analogue to Brooks' famous theorem~\cite{Brooks}, which was discovered by \textsc{Mohar} in 2010~\cite{Mo10}. Note that, given a digraph $D$, its maximum out-degree (respectively in-degree) is denoted by $\Delta^+(D)$ (respectively $\Delta^-(D)$).

\begin{theorem}[\textsc{Mohar}, 2010] \label{theorem_mohar-brooks}
Let $D$ be a connected digraph. Then, $D$ satisfies $\overr{\chi}(D) \leq \max \{\Delta^-(D), \Delta^+(D)\} + 1$ and equality holds if and only if $D$ is
\begin{itemize}
\item[\upshape (a)] a directed cycle of length $\geq 2$, or
\item[\upshape (b)] a bidirected cycle of odd length $\geq 3$, or
\item[\upshape (c)] a bidirected complete graph.
\end{itemize}
\end{theorem}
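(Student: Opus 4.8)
The plan is to derive Mohar's theorem as a special case of the paper's main $f$-partition result. The bridge is the observation that for a digraph $H$, being weakly $\mathbf{1}$-degenerate (where $\mathbf{1}$ denotes the constant function equal to $1$) is exactly the same as being acyclic: if $H$ is acyclic then every non-empty subdigraph $H'$ has a sink, i.e.\ a vertex $v$ with $d_{H'}^+(v)=0<1$; conversely, if $H$ contains a directed cycle $C$, then $C$ itself is a non-empty subdigraph in which every vertex $v$ satisfies $\min\{d_C^+(v),d_C^-(v)\}=1\not<1$, contradicting weak $\mathbf{1}$-degeneracy. Consequently, for any $k\ge 1$, a $k$-coloring of $D$ is precisely an $f$-partition of $D$ with $p=k$ and $f=(\mathbf{1},\dots,\mathbf{1})$, and $\overr{\chi}(D)$ is the least $k$ for which such an $f$-partition exists.

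First I would prove the upper bound $\overr{\chi}(D)\le\Delta+1$, where $\Delta:=\max\{\Delta^-(D),\Delta^+(D)\}$. Take $p=\Delta+1$ and $f=(\mathbf{1},\dots,\mathbf{1})$. Then for every vertex $v$ we have $\sum_{i=1}^{p}f_i(v)=\Delta+1>\Delta\ge\max\{d_D^+(v),d_D^-(v)\}$, so the hypothesis of the main theorem holds with strict inequality at every vertex. Since the characterization of bad pairs requires the degree inequality to be tight (tightness being exactly what obstructs the partition), strict inequality everywhere in the connected digraph $D$ guarantees that $(D,f)$ is not bad, and hence admits an $f$-partition, i.e.\ a $(\Delta+1)$-coloring. (The degenerate case $\Delta=0$, where $D$ is a single vertex, is immediate and falls under \textup{(c)} with $n=1$.)

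For the equality statement I would set $p=\Delta$ and $f=(\mathbf{1},\dots,\mathbf{1})$, which is legitimate for $\Delta\ge 1$; now $\sum_{i=1}^{p}f_i(v)=\Delta\ge\max\{d_D^+(v),d_D^-(v)\}$, so the main theorem again applies. If $(D,f)$ is not bad it yields a $\Delta$-coloring and $\overr{\chi}(D)\le\Delta<\Delta+1$; if $(D,f)$ is bad then $D$ has no $\Delta$-coloring, whence $\overr{\chi}(D)\ge\Delta+1$ and, with the upper bound, $\overr{\chi}(D)=\Delta+1$. Thus $\overr{\chi}(D)=\Delta+1$ holds if and only if $(D,(\mathbf{1},\dots,\mathbf{1}))$ is a bad pair, and the entire theorem reduces to showing that, for this particular $f$, the bad pairs are exactly the digraphs in \textup{(a)}, \textup{(b)}, \textup{(c)}. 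The easy direction checks that each of the three families is bad: a directed cycle is not acyclic, so it needs at least $2=\Delta+1$ colors; and for a bidirected graph $\overleftrightarrow{G}$ one has $\overr{\chi}(\overleftrightarrow{G})=\chi(G)$, whence a bidirected odd cycle needs $3=\Delta+1$ colors and a bidirected $K_n$ needs $n=\Delta+1$ colors.

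The hard part, and the main obstacle, is the converse: to show that a bad pair $(D,(\mathbf{1},\dots,\mathbf{1}))$ must be one of \textup{(a)}, \textup{(b)}, \textup{(c)}. This is where I would unfold the explicit structural characterization of bad pairs provided by the main theorem. Specializing that description to the case where all $f_i$ equal $\mathbf{1}$ and all degrees are bounded by $\Delta$, the admissible ``building blocks'' should collapse to precisely the digons and directed cycles, the bidirected odd cycles, and the bidirected complete graphs; the connectivity of $D$ then forces $D$ to consist of a single such block. Carrying out this reduction---verifying that no other configurations survive the constraint $f\equiv\mathbf{1}$ and matching the surviving ones to the three listed families---is the crux, and is essentially a dictionary translation from the general bad-pair description to Mohar's list.
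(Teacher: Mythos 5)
Your reduction is the right one, and it matches the paper's intent: the paper does not reprove Theorem~\ref{theorem_mohar-brooks} directly but derives the stronger list version (Theorem~\ref{theorem_harut-mohar-list}) from Theorem~\ref{theorem_main-result} in exactly the way you propose, with Theorem~\ref{theorem_mohar-brooks} then being the special case of constant lists. Your upper bound argument is sound (strict inequality $\sum_i f_i(v)=\Delta+1>\max\{d_D^+(v),d_D^-(v)\}$ at any vertex rules out a hard pair by Proposition~\ref{prop_digraph_f-hard}(a)), as is the reformulation of equality as ``$(D,\mathbbm{1}^{\Delta})$ is a hard pair.'' The one place where you stop short is the ``dictionary translation,'' and there your stated mechanism is slightly off: it is not connectivity that forces $D$ to be a single block, but degree saturation via the block decomposition of hard pairs. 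The tool you are missing is Proposition~\ref{prop_digraph_block-hardpair}: every hard pair decomposes into blocks $B$ carrying functions $f_B$ of type (M), (K), or (C) with $f(v)=\sum_{B\in\mathscr{B}_v(D)}f_B(v)$, and $f_B(w)=f(w)$ at non-separating vertices $w$. Taking an end-block $B$ and a non-separating $w\in V(B)$, the constraint $f_B(w)=(1,\dots,1)$ with $p=\Delta$ coordinates forces: type (M) only if $p=1$ (so $B$ is a directed cycle), type (C) only if $p=2$ (bidirected odd cycle), type (K) only if all $n_i=1$, i.e.\ $B=D(K_{\Delta+1})$. In each case Proposition~\ref{prop_digraph_f-hard}(a) applied to $(B,f_B)$ gives $d_B^+(v)=d_B^-(v)=\Delta$ for \emph{every} $v\in V(B)$, so no vertex of $B$ has a neighbour outside $B$ and hence $B=D$; this is what kills the merging construction (H4) and collapses the list to (a), (b), (c). With that half-page supplied, your proof is complete and is the same argument the paper uses for the list version.
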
 

In fact, it turns out that it is possible to obtain a choosability version of Brooks' Theorem for digraphs, \emph{i.e.}, a version regarding list-colorings. Given a digraph $D$, a color set $\Gamma$, and a function $L: V(D) \to 2^\Gamma$ (a so-called \textbf{list-assignment}), an $L$\textbf{-coloring} of $D$ is a function $\varphi:V(D) \to \Gamma$ such that $\varphi(v) \in L(v)$ for all $v \in V(D)$ and  $D[\varphi^{-1}(\alpha)]$ contains no directed cycle for each $\alpha \in \Gamma$ (if such a coloring exists, we say that $D$ is  $L$\textbf{-colorable}). Harutyunyan and Mohar \cite{HaMo11} proved the following, thereby extending a well-known theorem of Erd\H{o}s, Rubin and Taylor \cite{ErdRubTay79} for undirected graphs. 
Note that a \textbf{block} $B$ of a digraph is a maximal connected subdigraph that does not contain a separating vertex, \emph{i.e.}, the underlying graph $G(B)$ of $B$ is a block of the underlying graph $G(D)$ of $D$. By $\mathscr{B}(D)$ we denote the set of blocks of a digraph $D$. For $v \in V(D)$, $\mathscr{B}_v(D)$ denotes the set of blocks of $D$ containing $v$.

\begin{theorem}[\textsc{Harutyunyan} and \textsc{Mohar}, 2011]
\label{theorem_harut-mohar-list}
Let $D$ be a connected digraph, and let $L$ be a list-assignment such that $|L(v)| \geq \max \{d_D^+(v), d_D^-(v)\}$ for all $v \in V(D)$. Suppose that $D$ is not $L$-colorable. Then, the following statements hold:
\begin{itemize}
\item[\upshape (a)] $D$ is Eulerian and $|L(v)|= d_D^+(v) = d_D^-(v)$ for all $v \in V(D)$.
\item[\upshape (b)] If $B \in \mathscr{B}(D)$, then $B$ is {either a directed cycle of length $\geq 2$, a bidirected complete graph, or } a bidirected cycle of odd length $\geq 5$.
\item[\upshape (c)] For each $B \in \mathscr{B}(D)$ there is a set $\Gamma_B$ of $\Delta^+(B)$ colors such that for every $v \in V(D)$, the sets $\Gamma_B$ with $B \in \mathscr{B}_v(D)$ are pairwise disjoint and $L(v)= \bigcup_{B \in \mathscr{B}_v(D)} \Gamma_B$.
\end{itemize}
\end{theorem}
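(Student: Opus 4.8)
The plan is to deduce this statement from our main result on $f$-partitions, exploiting the fact that an $L$-coloring of a digraph is nothing but a special $f$-partition. Fix the color set $\Gamma$, enumerate its elements as $\alpha_1,\ldots,\alpha_p$, and define $f=(f_1,\ldots,f_p)\colon V(D)\to\mathbb{N}_0^p$ by setting $f_i(v)=1$ if $\alpha_i\in L(v)$ and $f_i(v)=0$ otherwise. Then $\sum_{i=1}^p f_i(v)=|L(v)|\ge\max\{d_D^+(v),d_D^-(v)\}$ for every $v\in V(D)$, so the pair $(D,f)$ satisfies exactly the degree hypothesis of the main theorem.

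The first step is to verify that $D$ is $L$-colorable if and only if $(D,f)$ admits an $f$-partition. If a vertex $v$ lies in a part $D_i$, then the one-vertex subdigraph on $v$ shows that weak $f_i$-degeneracy forces $0<f_i(v)$, hence $f_i(v)=1$ and $\alpha_i\in L(v)$; in other words, a vertex may be placed in part $i$ only if its list contains $\alpha_i$. Moreover, on the vertex set of any part $D_i$ the coordinate $f_i$ is identically $1$, and a digraph is weakly $\mathbf 1$-degenerate precisely when every non-empty subdigraph contains a vertex of in-degree or out-degree $0$, which is the same as being acyclic. Hence the parts of an $f$-partition are exactly the color classes of an $L$-coloring, and the two notions coincide.

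Now suppose $D$ is not $L$-colorable. By the equivalence just established, $(D,f)$ admits no $f$-partition and is therefore a bad pair, so the second step is to specialize the complete characterization of bad pairs furnished by the main theorem to the present $0/1$-valued $f$. The degree-tightness contained in that characterization gives $|L(v)|=d_D^+(v)=d_D^-(v)$ for all $v$, so $D$ is balanced and, being connected, Eulerian; this is (a). For constant coordinates of value $1$ the catalogue of admissible bad blocks collapses to directed cycles, bidirected complete graphs, and bidirected odd cycles, with the bidirected triangle absorbed into the bidirected-complete case so that only odd length $\ge 5$ remains as a separate type; this is (b).

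The main obstacle is establishing (c), that is, reading off from the block-wise structure of the bad pair the decomposition $L(v)=\bigcup_{B\in\mathscr B_v(D)}\Gamma_B$ into pairwise disjoint palettes $\Gamma_B$ with $|\Gamma_B|=\Delta^+(B)$. The characterization tells us, block by block, which coordinates $f_i$ are non-zero, and from this one must deduce that a single palette of size $\Delta^+(B)$ is used consistently throughout each block $B$, and that at a cut vertex shared by several blocks these palettes are forced to be disjoint and to exhaust the list. The bookkeeping at cut vertices — gluing the palettes consistently along the block-tree while keeping them disjoint, so that their sizes add up to $|L(v)|=d_D^+(v)$ — is where the real work lies; the degeneracy and acyclicity reductions in the earlier steps are routine.
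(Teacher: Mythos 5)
Your reduction to $f$-partitions, the equivalence of $L$-colorability with $f$-partitionability for the indicator function $f$, and the derivations of (a) and (b) all match the paper's proof: the paper likewise applies Theorem~\ref{theorem_main-result} to conclude $(D,f)$ is a hard pair, gets (a) from Proposition~\ref{prop_digraph_f-hard}(a), and gets (b) by observing that a type (M) block whose nonzero coordinate is constantly $1$ must have $d_B^+(v)=d_B^-(v)=1$ everywhere, hence is a directed cycle. Your justification of the equivalence (the one-vertex subdigraph forcing $f_i(v)=1$ on each part, and weak $\mathbf{1}$-degeneracy being acyclicity) is actually spelled out more carefully than in the paper.

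The genuine gap is (c): you correctly identify what must be shown, but you stop at announcing that the "bookkeeping at cut vertices" is where the real work lies, without supplying it. The missing ingredient is precisely Proposition~\ref{prop_digraph_block-hardpair}: every hard pair $(D,f)$ admits a uniquely determined family of functions $f_B$, one per block, such that each $(B,f_B)$ is a hard pair of type (M), (K), or (C) and $f(v)=\sum_{B\in\mathscr{B}_v(D)}f_B(v)$ for every $v$. This is proved by induction on the number of blocks, peeling off one application of the merging operation (H4) at a time; it is not automatic from the recursive definition alone that the block-wise split is consistent and unique, which is why the paper isolates it as a separate proposition. Once you have it, (c) is immediate for $0/1$-valued $f$: each $f_B$ is again $0/1$-valued (the summands at a cut vertex are componentwise nonnegative and sum to a $0/1$ vector), so setting $\Gamma_B=\{i : f_{B,i}\not\equiv 0\}$ the sets $\Gamma_B$ with $B\in\mathscr{B}_v(D)$ are pairwise disjoint with union equal to the support of $f(v)$, i.e.\ $L(v)$; and $|\Gamma_B|=\sum_i f_{B,i}(v)=d_B^+(v)=\Delta^+(B)$ because all three admissible block types (directed cycle, bidirected complete graph, bidirected odd cycle) are out-regular. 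You should either prove the block-decomposition statement or cite it explicitly; as written, part (c) of your argument is a description of the goal rather than a proof.
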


In the present paper, we will obtain a generalization of the two previously mentioned results by examining degenerate digraphs. The concept of digraph degeneracy was introduced by Bokal et al.~\cite{Bokal04} in 2004. Given a positive integer $k$, a digraph $D$ is \textbf{weakly $k$-degenerate} if every non-empty subdigraph $D'$ contains a vertex $v$ with $\min\{d_{D'}^+(v), d_{D'}^-(v)\} < k$. As a consequence, a digraph is acyclic if and only if it is weakly $1$-degenerate and so a coloring of a digraph coincides with a partition of the digraph into induced subdigraphs which all are weakly $1$-degenerate. We shall extend this definition to the case of variable degeneracy, based on the model of Borodin, Kostochka, and Toft~\cite{BorKosToft} for undirected graphs. Let $D$ be a digraph and let $h:V(D) \to \mathbb{N}_0$ be a function. Then, $D$ is \textbf{weakly $h$-degenerate} if every non-empty subdigraph $D'$ contains a vertex $v$ with  $\min\{d_{D'}^+(v), d_{D'}^-(v)\} < h(v)$. 
Clearly, if $h \equiv k$ is the constant function, then $D$ is weakly $h$-degenerate if and only if $D$ is weakly $k$-degenerate. 

We will connect the concept of degeneracy with partitions of digraphs. A \textbf{partition} and $p$-\textbf{partition} of a digraph $D$ is a sequence $(D_1,D_2,\ldots,D_p)$ of pairwise disjoint induced subdigraphs of $D$ such that $V(D)=V(D_1)\cup V(D_2) \cup \ldots \cup V(D_p)$. Now let $f:V(D) \to \mathbb{N}_0^p$ be a vector function. Then we denote by $f_i$ the $i$-th coordinate of $f$, \emph{i.e.} $f=(f_1,f_2,\ldots,f_p)$. Then, an \textbf{$f$-partition} of $D$ is a $p$-partition $(D_1,D_2,\ldots,D_p)$ of $D$ such that $D_i$ is weakly $f_i$-degenerate for $i \in [1,p]$. If $D$ admits an $f$-partition, then we also say that $D$ is \textbf{$f$-partitionable}. The main aim of this paper is to determine under which degree conditions $D$ {admits} an $f$-partition.
First of all, let us motivate why this is worthwhile considering. To this end, let $D$ be a digraph and let $L$ be a list-assignment for $D$. Moreover, let $\Gamma=\bigcup_{v \in V(D)} L(D)$ be the set of all colors appearing in some list. By renaming the colors if necessary we may assume $\Gamma=[1,p]$. Let $f: V(D) \to \mathbb{N}_0^p$ be the vector function with
$$f_i(v)=
\begin{cases}
1 \text{ if } i \in L(v), \text{ and}\\
0 \text{ if } i \not \in L(v)
\end{cases}$$ for $i \in [1,p]$ (see also Figure~\ref{fig_transforming_lists}).

\begin{figure}[H]
\centering
\resizebox{0.8\linewidth}{!}{
\begin{tikzpicture}
[node distance=1cm, bend angle=20,
vertex/.style={circle,minimum size=2mm,very thick, draw=black, fill=black, inner sep=0mm}, information text/.style={fill=red!10,inner sep=1ex, font=\Large},>={[scale=1.1]Stealth}]

\begin{scope}[xshift=-5cm]

\node[draw=none,minimum size=3cm,regular polygon,regular polygon sides=4] (a) {};
\foreach \x in {1,2,3,4}
\node[vertex] (v\x) at (a.corner \x) {};

\node[draw=none,minimum size=8cm,regular polygon,regular polygon sides=4] (b) {};
\foreach \x in {1,2,3,4}
\node[vertex] (u\x) at (b.corner \x) {};

\node at (v1) [label={east:$\{1,2,4\}$}] {};
\node at (v2) [label={west:$\{1,3\}$}] {};
\node at (v3) [label={west:$\{2,4\}$}] {};
\node at (v4) [label={east:$\{3,4\}$}] {};

\node at (u1) [label={45:$\{1,3,4\}$}] {};
\node at (u2) [label={135:$\{2,3\}$}] {};
\node at (u3) [label={225:$\{1,4\}$}] {};
\node at (u4) [label={315:$\{2,3,4\}$}] {};

\path[->]
(v1) edge (v2)
(v2) edge (v3)
(v3) edge (v4)
(v4) edge (v1)
(u1) edge (u4)
(u4) edge  node [name=l1, midway, sloped, below=2pt] {} (u3)
(u3) edge (u2)
(u2) edge (u1);

\path[<->]
(u1) edge (v1)
(u2) edge (v2)
(u3) edge (v3)
(u4) edge (v4);		
\node at (l1) [yshift=-1cm] {$(D,L)$};

\node (h1) at (u1) [xshift=1.8cm, yshift=1.8cm] {};
\node (h2) at (u2) [xshift=-1.8cm, yshift=1.8cm] {};
\node (h3) at (u3) [xshift=-1.8cm, yshift=-1.8cm] {};
\node (h4) at (u1) [xshift=1.8cm, yshift=-1.8cm] {};

\node (l1) [draw=black, rectangle, fit=(h1)(h2)(h3)(h4)] {};
\node (r1) at (l1.east) {};

\end{scope}

\begin{scope}[xshift=8cm]
\node[draw=none,minimum size=3cm,regular polygon,regular polygon sides=4] (a) {};
\foreach \x in {1,2,3,4}
\node[vertex] (v\x) at (a.corner \x) {};

\node[draw=none,minimum size=8cm,regular polygon,regular polygon sides=4] (b) {};
\foreach \x in {1,2,3,4}
\node[vertex] (u\x) at (b.corner \x) {};

\node at (v1) [label={east:$(1,1,0,1)$}] {};
\node at (v2) [label={west:$(1,0,1,0)$}] {};
\node at (v3) [label={west:$(0,1,0,1)$}] {};
\node at (v4) [label={east:$(0,0,1,1)$}] {};

\node at (u1) [label={45:$(1,0,1,1)$}] {};
\node at (u2) [label={135:$(0,1,1,0)$}] {};
\node at (u3) [label={225:$(1,0,0,1)$}] {};
\node at (u4) [label={315:$(0,1,1,1)$}] {};

\path[->]
(v1) edge (v2)
(v2) edge (v3)
(v3) edge (v4)
(v4) edge (v1)
(u1) edge (u4)
(u4) edge  node [name=l1, midway, sloped, below=2pt] {} (u3)
(u3) edge (u2)
(u2) edge (u1);

\path[<->]
(u1) edge (v1)
(u2) edge (v2)
(u3) edge (v3)
(u4) edge (v4);

\node at (l1) [yshift=-1cm] {$(D,f)$};

\node (h1) at (u1) [xshift=1.8cm, yshift=1.8cm] {};
\node (h2) at (u2) [xshift=-1.8cm, yshift=1.8cm] {};
\node (h3) at (u3) [xshift=-1.8cm, yshift=-1.8cm] {};
\node (h4) at (u1) [xshift=1.8cm, yshift=-1.8cm] {};

\node (l2) [draw=black, rectangle, fit=(h1)(h2)(h3)(h4)] {};
\node (r2) at (l2.west) {};

\end{scope}

\draw [shorten >=.5mm,-to,thick,decorate,
decoration={snake,amplitude=.4mm,segment length=2mm,
pre=moveto,pre length=1mm,post length=2mm}]
(r1) -- (r2); 

%
%

%
%
%
%
%
\end{tikzpicture}
\caption{Transforming a list-assignment into a vector function}
\label{fig_transforming_lists}
\end{figure}
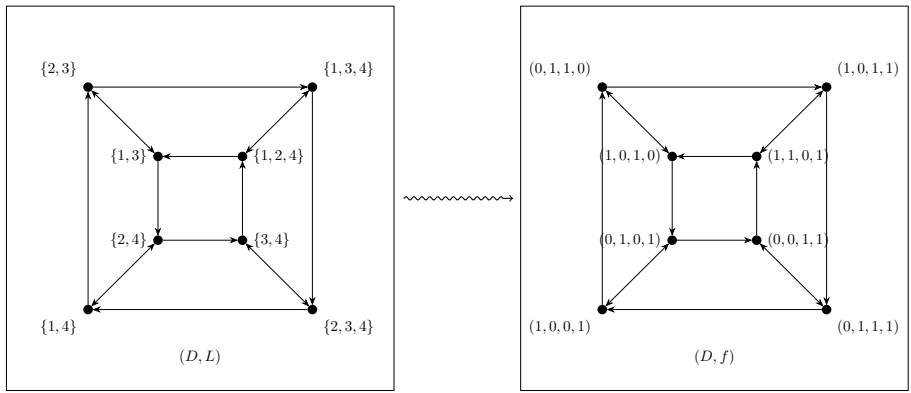

If $D$ is $L$-colorable, then it easy to confirm that the partition $(D_1,D_2,\ldots,D_p)$ of $D$ where $D_i$ is the subdigraph of $D$ induced by vertices of color $i$ is indeed an $f$-partition of $D$. Conversely, if $D$ has an $f$-partition $(D_1,D_2,\ldots,D_p)$, then setting $\varphi(v)=i$ if $v \in V(D_i)$ leads to an $L$-coloring of $D$. Hence, $D$ is $L$-colorable if and only if $D$ has an $f$-partition and so the task of finding an $f$-partition generalizes the usual list-coloring problem (and, of course, the usual coloring problem, too).

Since {deciding whether} the dichromatic number is at most two is already \NP-hard (see \cite{Bokal04}), it is pointless to try to determine whether a digraph $D$ is $f$-partitionable in general. Instead, we need to find a reasonable {condition} for the function $f$ {that would allow a digraph satisfying this condition to be $f$-partitionable}. Note that in the above transformation, it follows from the definition of $f$ that $$|L(v)| = f_1(v) + f_2(v) + \ldots + f_p(v)$$ for all $v \in V(D)$. Thus, Theorem~\ref{theorem_harut-mohar-list} suggests that $$f_1(v) + f_2(v) + \ldots + f_p(v) \geq \max \{d_D^+(v),d_D^-(v)\}$$ for all $v \in V(D)$ might be the right condition to investigate. Indeed, we will prove that this condition is always sufficient for the existence of an $f$-partition, unless $(D,f)$ belongs to the following, recursively defined class of configurations. Clearly, $D$ admits an $f$-partition if and only if each {connected} component of $D$ has one and, hence, it suffices to examine connected digraphs.

Let $D$ be a connected digraph, let $p \geq 1$, and let $f:V(D) \to \mathbb{N}_0^p$ be a vector function. We say that $(D,f)$ is a \textbf{hard pair} and that $D$ is {{\bf $f$-hard}} if one of the following four conditions hold.

\begin{itemize}
\item[(H1)] $D$ is a block, $D$ is Eulerian, and there exists an index $j \in [1,p]$ such that
$$f_i(v)=
\begin{cases}
d_D^+(v)=d_D^-(v) & \text{if } i=j, and\\
0 & \text{otherwise}
\end{cases}$$
for all $i \in [1,p]$ and for each $v \in V(D)$. In this case, we say that $(D,f)$ is a hard pair of type \textbf{(M)}.
\item[(H2)] $D$ is a bidirected complete graph and there are integers $n_1,n_2,\ldots,n_p \geq 0$ with at least two $n_i$ different from zero such that $n_1 + n_2 + \ldots + n_p=|D|-1$ and that
$$f(v)=(n_1,n_2,\ldots,n_p) \quad \text{for all } v \in V(D).$$
In this case, we say that $(D,f)$ is a hard pair of type \textbf{(K)}.
\item[(H3)] $D$ is a bidirected cycle of odd length and there are two indices $k \neq \ell$ from the set $[1,p]$ such that
$$f_i(v)=
\begin{cases}
1 & \text{if } i \in \{k,\ell\}, and\\
0 & \text{otherwise}
\end{cases}
$$ for all $i \in [1,p]$ and for each $v \in V(D)$. In this case, we say that $(D,f)$ is a hard pair of type \textbf{(C)}.
\item[(H4)] There are two hard pairs $(D^1,f^1)$ and $(D^2,f^2)$ with $f^j:V(D^j) \to \mathbb{N}_0^p$ for $j \in \{1,2\}$ such that $D$ is obtained from $D^1$ and $D^2$ by identifying two vertices $v^1 \in V(D^1)$ and $v^ 2 \in V(D^2)$ to a new vertex $v$. Furthermore, for $w \in V(D)$, it holds {that}
$$f(w)=
\begin{cases}
f^1(w) & \text{if } w \in V(D^1) \setminus \{v^1\}, \\
f^2(w) & \text{if } w \in V(D^2) \setminus \{v^2\}, \\
f^1(v^1) + f^2(v^2) & \text{if } w=v.
\end{cases}$$
In this case we say that $(D,f)$ is obtained from $(D^1,f^1)$ and $(D^2,f^2)$ by \textbf{merging} $v^1$ and $v^2$ to $v$.
\end{itemize}
In order to develop a better feeling of how hard pairs may look like, we refer the reader to Figure~\ref{fig_hard_pairs_example}. The main result of this paper is the following.

\begin{theorem}\label{theorem_main-result}
Let $D$ be a connected digraph, let $p \geq 1$ be an integer, and let $f:V(D) \to \mathbb{N}_0^p$ be a vector function such that $f_1(v) + f_2(v) + \ldots + f_p(v) \geq \max \{d_{D}^+(v), d_D^-(v)\}$ for all $v \in V(D)$. Then, $D$ is not $f$-partitionable if and only if $(D,f)$ is a hard pair.
\end{theorem}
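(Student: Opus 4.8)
The plan is to prove the two implications of \autoref{theorem_main-result} separately: the ``if'' direction by structural induction along the recursive definition (H1)--(H4) of hard pairs, and the ``only if'' direction by analysing a counterexample with the fewest vertices (and, among those, the fewest arcs).

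\textbf{The ``if'' direction.} I would first record two elementary facts about weak degeneracy: it is \emph{monotone}, so that $g \le g'$ pointwise together with weak $g$-degeneracy yields weak $g'$-degeneracy, and it is \emph{hereditary}, so that every subdigraph of a weakly $g$-degenerate digraph is again weakly $g$-degenerate. With these the three base types are direct computations. For a bidirected complete graph an induced subdigraph on $k$ vertices is weakly $m$-degenerate exactly when $k \le m$, so a partition witnessing \textbf{(K)} would force $|D| = \sum_i k_i \le \sum_i n_i = |D|-1$. For a bidirected odd cycle a class of budget $1$ must be an independent set of the underlying cycle, and two such classes would $2$-colour an odd cycle, which is impossible \textbf{(C)}. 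In \textbf{(M)} every non-$j$ class is forced to be empty, while $D$ itself has no removable vertex because $\min\{d^+_D(v),d^-_D(v)\} = f_j(v)$ for all $v$. The inductive step \textbf{(H4)} is the only non-trivial part: I would prove a \emph{splitting lemma} stating that if $(D_1,\dots,D_p)$ is an $f$-partition of the digraph $D$ obtained by merging $v^1,v^2$ into $v$, then its restriction to $D^1$ is an $f^1$-partition or its restriction to $D^2$ is an $f^2$-partition. Away from $v$ heredity already suffices; at $v$, say $v$ lies in class $s$, one compares a weak-degeneracy elimination order of $D_s$ with its restrictions to the two sides and shows that the combined budget $f_s(v) = f^1_s(v^1) + f^2_s(v^2)$ cannot be needed in full on both sides at once. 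Given the lemma, if both $(D^1,f^1)$ and $(D^2,f^2)$ are not $f$-partitionable then neither is $(D,f)$, closing the induction.

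\textbf{The ``only if'' direction: reductions.} Let $(D,f)$ be a minimal counterexample. The first goal is to reach the conclusion of \autoref{theorem_harut-mohar-list}(a), namely that $D$ is Eulerian and the hypothesis holds with equality, $\sum_i f_i(v) = d^+_D(v) = d^-_D(v)$ for all $v$. I would establish this by a \emph{slack lemma}: if some vertex has $\sum_i f_i(v) > \max\{d^+_D(v), d^-_D(v)\}$, or if $D$ is not Eulerian, then $D$ is $f$-partitionable. The proof is a greedy peeling argument using connectivity to order the vertices so that, when a vertex is placed, some class still has budget strictly exceeding the relevant degree into the already-placed part, the extra room coming from the slack or from the gap between $d^+$ and $d^-$. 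Next I would eliminate cut vertices. If $v$ is a cut vertex, write $D$ as $D^1,D^2$ glued at $v$; since $D$ is Eulerian the degree at $v$ splits additively, so the tight budget $f(v)$ can be split as $f^1(v^1)+f^2(v^2)$ with each side again tight and degree-feasible. A converse splitting lemma (partitions of $D^1$ and of $D^2$ can be aligned at $v$ and merged) shows that, as $D$ is not $f$-partitionable, the split may be chosen so that neither $(D^1,f^1)$ nor $(D^2,f^2)$ is partitionable; minimality makes both hard pairs, and then \textbf{(H4)} presents $(D,f)$ as a hard pair, a contradiction. Hence the minimal counterexample $D$ is a single block.

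\textbf{The ``only if'' direction: the block case.} It remains to show that a $2$-connected Eulerian digraph with tight budget that is not $f$-partitionable is of type \textbf{(M)}, \textbf{(K)} or \textbf{(C)}; this directed Brooks argument is the main obstacle. I would first note that a vertex may be placed only in a class on which its budget is positive, so the problem behaves like a degeneracy-respecting list colouring. If $f$ is supported on a single common index the pair is already of type \textbf{(M)}. Otherwise at least two indices carry positive budget, and I would run a directed analogue of the classical Brooks ordering: choose a suitable last vertex $v_0$ and an elimination order of $D-v_0$ in which every vertex sees enough of its later in- and out-neighbours that the tight budget always leaves a usable class, then place $v_0$; the only obstructions to building such an order are that $D$ is a bidirected complete graph or a bidirected cycle. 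In the complete case tightness forces $\sum_i n_i = |D|-1$ and non-partitionability forces at least two non-zero $n_i$, i.e.\ type \textbf{(K)}; in the cycle case tightness gives common degree $2$, the cycle must be bidirected and odd, and a short analysis of which uniform budgets fail to $2$-colour it isolates type \textbf{(C)}, every other tight budget being partitionable or already type \textbf{(M)}. I expect the delicate points to be making the directed Brooks ordering work with variable budgets, controlling the in-degree and out-degree conditions of weak degeneracy simultaneously, and the precise case check on the bidirected cycle; together with the two splitting lemmas, these are where essentially all the work lies.
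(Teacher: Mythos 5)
Your overall architecture for the ``if'' direction and for the reduction of the ``only if'' direction to a single block matches the paper's: the base types (M), (K), (C) are direct computations, the merging step (H4) is handled by showing that bad subdigraphs on the two sides of the cut vertex union to a bad subdigraph of the merged class (your splitting lemma is the contrapositive of this), and a minimal counterexample is shown to be an Eulerian block with tight budgets. One caution on the cut-vertex step: it is not enough to assert that ``the split may be chosen so that neither side is partitionable.'' A priori every admissible split of $f(v)$ could leave one side partitionable, and that alone does not produce an $f$-partition of $D$. The paper resolves this by deriving a \emph{canonical} split from an $f$-partition of $D-v$ (setting $f^j_i(v)=d^+_{D_i^j}(v)$, which is well defined because each component of $D_i+v$ through $v$ is Eulerian), and then proving that for this particular split, partitionability of either side combined with the restriction of the $D-v$ partition to the other side already yields an $f$-partition of $D$. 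You will need that construction, or an equivalent one, to make your reduction sound.

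The genuine gap is in the block case. You propose a directed Brooks ordering: pick a last vertex $v_0$, order $D-v_0$ so every vertex sees a later neighbour, and claim the only obstructions are bidirected complete graphs and bidirected cycles. The greedy phase is fine (Eulerianity gives $\min\{d^+_{D'}(v),d^-_{D'}(v)\}<\sum_i f_i(v)$ whenever $v$ has a later neighbour), but unsticking $v_0$ is where the classical argument breaks down, for two reasons specific to this setting. First, with vector budgets the ``place two non-adjacent neighbours of $v_0$ in the same class'' trick may be impossible to initiate: two neighbours $u,w$ of $v_0$ need a common index $i$ with $f_i(u),f_i(w)>0$, and variable budgets (this is exactly the list-colouring situation) need not provide one; analysing when no such pair exists is essentially equivalent to re-deriving the Gallai-tree structure, i.e.\ it is the content of the theorem rather than a step towards it. Second, the relevant quantity is $\min\{d^+_{D_i+v_0}(v_0),d^-_{D_i+v_0}(v_0)\}$, so a ``saving'' of one arc must land on the correct side of the in/out split in some single class; a pair $u,w$ with $uv_0\in A(D)$ and $v_0w\in A(D)$ (and no reverse arcs) placed in the same class creates no usable imbalance at all. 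Your claim that the only obstructions are bidirected complete graphs and bidirected cycles is therefore unsupported as stated. The paper avoids this entirely: it develops a colour-shifting (Kempe-like) procedure around cycles of $G(D)$ (Claims~\ref{claim_shifting_consecutive_vertices} and~\ref{claim_no-chords}), deduces that every even cycle of $G(D)$ has at least two chords, and then invokes Gallai's Lemma~\ref{lemma_gallai-chords} to conclude that $G(D)$ is a complete graph or an odd cycle, after which the type (K)/(C)/(M) analysis is short. If you want to keep a greedy flavour, you would still need something playing the role of that shifting argument to classify the blocks on which every choice of $v_0$ and every ordering fails.
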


\begin{figure}[htbp]
\centering
\resizebox{\linewidth}{!}{
\begin{tikzpicture} [node distance=1cm, bend angle=20, >={[scale=1.1]Stealth}]

\node[draw=none, minimum size=4cm, regular polygon, regular polygon sides=5, xshift=16cm](e){};
\node[vertex] (v51) [label={north:$(1,1,0)$}] at (e.corner 1) {};
\node[vertex] (v52) [label={north west:$(1,1,0)$}]at (e.corner 2) {};
\node[vertex] (v53) [label={south west:$(1,1,0)$}]at (e.corner 3) {};
\node[vertex] (v54) [label={south east:$(1,1,0)$}]at (e.corner 4) {};
\node[vertex] (v55) [label={north east:$(1,1,0)$}]at (e.corner 5) {};

\node[draw=none, minimum size=4cm, regular polygon, regular polygon sides=4, xshift=8cm] (d) {};

\node[vertex] (v41) [label={north east:$(2,0,1)$}] at (d.corner 1) {};
\node[vertex] (v42) [label={north west:$(2,0,1)$}] at (d.corner 2) {};
\node[vertex] (v43) [label={south west:$(2,0,1)$}] at (d.corner 3) {};
\node[vertex] (v44) [label={south east:$(2,0,1)$}] at (d.corner 4) {};

\node[draw=none, minimum size=4cm, regular polygon, regular polygon sides=6] (a) {};

\node[vertex] (v11) [label={north east:$(0,2,0)$}] at (a.corner 1) {};
\node[vertex] (v12) [label={north west:$(0,2,0)$}] at (a.corner 2) {};
\node[vertex] (v13) [label={west:$(0,1,0)$}] at (a.corner 3) {};
\node[vertex] (v14) [label={south west:$(0,2,0)$}] at (a.corner 4) {};
\node[vertex] (v15) [label={south east:$(0,2,0)$}] at (a.corner 5) {};
\node[vertex] (v16) [label={east:$(0,1,0)$}] at (a.corner 6) {};
\node[vertex] (v17) [label={east:$(0,4,0)$}]{};

\path[->]
		 (v11) edge (v12) 
		 (v12) edge (v13) 
		 (v13) edge (v14)
		 (v14) edge (v15)
		 (v15) edge (v16)
		 (v16) edge (v11);
\path[<->]
		(v17) edge (v11)
		(v17) edge (v12)
		(v17) edge (v14)
		(v17) edge (v15);

\path[<->]
		(v41)  edge  (v42)
			   edge  (v43)	   
			   edge  (v44)    
		 (v42) edge  (v43)	   
		 	   edge  (v44) 
		 (v43) edge  (v44);
		 	   
\path[<->]
		 (v51) edge (v52) 
		 (v52) edge (v53) 
		 (v53) edge (v54)
		 (v54) edge (v55) 
		 (v55) edge (v51);
		 
\begin{pgfonlayer}{background}
\node (w51) at (v51) [yshift=1cm]{};
\node (w52) at (v52) [xshift=-1.2cm]{};
\node (w53) at (v53) [xshift=-1.2cm, yshift=-1cm]{};
\node (w54) at (v54) [xshift=1.2cm, yshift=-1cm]{};
\node (w55) at (v55) [xshift=1.2cm]{};

\node [rectangle,  fit=(w51)(w52)(w53)(w54)(w55), label={[font=\Large, yshift=.7 cm]below:(C)}]{};

\begin{scope}
\node [rectangle, fit=(w51)(w52)(w53)(w54)(w55), xshift=-8cm, label={[font=\Large, yshift=.7 cm]below:(K)}]{};
\end{scope}

\begin{scope}
\node [rectangle, fit=(w51)(w52)(w53)(w54)(w55), xshift=-16cm, label={[font=\Large, yshift=.7 cm]below:(M)}]{};
\end{scope}
\end{pgfonlayer}		
		 
\end{tikzpicture}
\caption{Examples of hard pairs}
\label{fig_hard_pairs_example}
\end{figure}
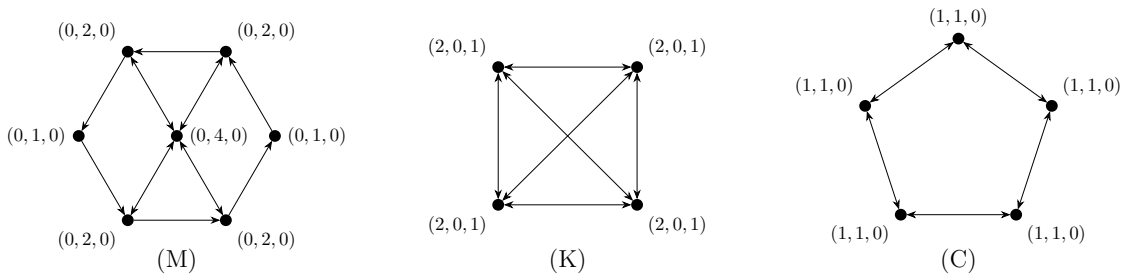

\section{Basic Terminology}\label{section_definitions}
Let $D=(V(D),A(D))$ be a digraph, where $V(D)$ is the \textbf{set of vertices} of $D$ and $A(D)$ is the \textbf{set of arcs} of $D$. The \textbf{order} $|D|$ of $D$ is the size of $V(D)$. The digraphs in this paper do not have loops nor parallel arcs; however, there may be two arcs in opposite directions between two vertices (in this case we say that the arcs are \textbf{opposite}). We denote by $uv$ the arc whose \textbf{initial vertex} is $u$ and whose \textbf{terminal vertex} is $v$. Two vertices $u,v$ are \textbf{adjacent} if at least one of $uv$ and $vu$ belongs to $A(D)$. If $u$ and $v$ are adjacent, we also say that $u$ is a \textbf{neighbor} of $v$ and vice versa. If $uv \in A(D)$, then $v$ is called an \textbf{out-neighbor} of $u$ and $u$ is called an \textbf{in-neighbor} of $v$. Given a digraph $D$ and a vertex set $X$,  we denote  {by $D[X]$ the subdigraph of $D$}  \textbf{induced} by the vertex set $X$, that is, $V(D[X])=X$ and $A(D[X])=\{uv \in A(D) ~|~ u,v \in X\}$. A digraph $D'$ is said to be an induced subdigraph of $D$ if $D'=D[V(D')]$. As usual, if $X$ is a subset of $V(D)$, we define $D-X=D[V(D) \setminus X]$. If $X=\{v\}$ is a singleton, we use $D-v$ rather than $D- \{v\}$. The \textbf{out-degree} of a vertex $v \in V(D)$, {denoted $d_D^+(v)$}, is the number of arcs whose inital vertex is $v$. Similarly, {the \textbf{in-degree} of $v$, denoted $d_D^-(v)$ is} number of arcs whose terminal vertex is $v$. A vertex $v \in V(D)$ is \textbf{Eulerian} if $d_D^+(v)=d_D^-(v)$. Moreover, the digraph $D$ is \textbf{Eulerian} if every vertex of $D$ is Eulerian.

The \textbf{underlying} graph $G(D)$ of $D$ is the simple undirected graph with $V(G(D))=V(D)$ and $\{u,v\}\in E(G(D))$ if and only if at least one of $uv$ and $vu$ belongs to $A(D)$. {A  {\bf component} of a digraph $D$ is a connected component of $G(D)$ and $D$ is {\bf connected} if it has precisely one component}. A \textbf{separating vertex} of a connected digraph $D$ is a vertex $v \in V(D)$ such that $D-v$ is not connected. Furthermore, a \textbf{block} of $D$ is a maximal subdigraph $D'$ of $D$ such that $D'$ has no separating vertex. By $\mathscr{B}(D)$, we denote the \textbf{set of blocks} of $D$. Moreover, if $v \in V(D)$, we denote by $\mathscr{B}_v(D)$ the set of blocks of $D$ containing $v$. A \textbf{bidirected} graph is a digraph that can be obtained from an undirected  (simple) graph $G$ by replacing each edge by two opposite arcs, we denote it by $D(G)$. 

\section{Proof of the main result}
In order to prove Theorem~\ref{theorem_main-result}, we need two classic results for undirected graphs. The first one is an easy consequence of Menger's Theorem and usually referred to as the Fan Lemma (see, e.g. \cite[Corollary 3.3.4]{Diestel}). The second lemma is due to Gallai~\cite[Satz 1.9]{Gal63a}. Recall that a \textbf{chord} of a cycle $C$ in an undirected graph $G$ is an edge of $G$ between vertices of $C$ that does not belong to the edges of $C$. 

\begin{lemma}[The Fan Lemma]\label{lemma_fan-lemma}
Let $G$ be a $k$-connected graph, let $v \in V(G)$, and let $X \subseteq V(G) \setminus \{v\}$ be a set of cardinality at least $k$. Then, there are $k$ paths from $v$  to vertices of $X$ whose only common vertex is $v$ and whose only intersection with $X$ are the respective end-vertices.
\end{lemma}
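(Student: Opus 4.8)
The plan is to derive the statement from the vertex version of Menger's Theorem (see, e.g., \cite{Diestel}) by a standard auxiliary-vertex reduction. I would form a new graph $G'$ by adding to $G$ a single new vertex $t$ (distinct from all vertices of $G$) and joining $t$ by an edge to every vertex of $X$, leaving all other adjacencies unchanged. Since $v \notin X$, the vertices $v$ and $t$ are non-adjacent in $G'$, so the local form of Menger's Theorem applies and guarantees that the maximum number of internally disjoint $v$--$t$ paths in $G'$ equals the minimum size of a $v$--$t$ separator, where a separator is a set $S \subseteq V(G') \setminus \{v,t\} = V(G) \setminus \{v\}$ meeting every $v$--$t$ path.

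First I would show that a family of $k$ internally disjoint $v$--$t$ paths in $G'$ yields the desired fan in $G$. Given such paths $P_1,\dots,P_k$, each $P_i$ reaches $t$ only through a neighbor of $t$, i.e.\ through a vertex of $X$; deleting $t$ turns $P_i$ into a path from $v$ into $X$. Truncating this path at the first vertex $y_i \in X$ that it meets produces a path $Q_i$ from $v$ to $y_i$ whose only vertex in $X$ is its endpoint $y_i$. The $Q_i$ are internally disjoint (being subpaths of internally disjoint paths, they share only $v$), and the endpoints $y_1,\dots,y_k$ are pairwise distinct, because each $y_i$ is an internal vertex of $P_i$ (it differs from $v$, as $v \notin X$, and from $t$) and internal vertices of distinct $P_i$ are disjoint. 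Hence $Q_1,\dots,Q_k$ form a fan of size $k$ as required.

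It therefore remains to show that every $v$--$t$ separator $S$ of $G'$ has $|S| \geq k$. Suppose for contradiction that $|S| \leq k-1$. Since $G$ is $k$-connected and $|S| < k$, the graph $G - S$ is connected and has at least two vertices. As $|X| \geq k > |S|$, there is a vertex $x \in X \setminus S$, and of course $v \notin S$; both $v$ and $x$ lie in the connected graph $G - S$, so there is a $v$--$x$ path in $G - S$. Appending the edge $xt$ gives a $v$--$t$ path in $G' - S$, contradicting the assumption that $S$ is a separator. Thus the minimum separator has size at least $k$, so by Menger's Theorem there are at least $k$ internally disjoint $v$--$t$ paths in $G'$; taking any $k$ of them and applying the reduction above produces the required $k$ paths.

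I do not expect a serious obstacle here; the only points requiring care are the two translations between $v$--$t$ paths in $G'$ and fans in $G$ — in particular ensuring, via truncation, that each fan path meets $X$ only in its endpoint and that the endpoints are distinct — together with the routine verification that deleting fewer than $k$ vertices from a $k$-connected graph leaves a connected graph still containing a vertex of $X$ outside $S$.
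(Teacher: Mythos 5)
Your proof is correct and follows exactly the route the paper indicates: the paper gives no proof of its own but cites the lemma as an easy consequence of Menger's Theorem (Diestel, Corollary~3.3.4), and your auxiliary-vertex construction with the truncation-at-first-$X$-vertex argument is precisely that standard reduction. All the delicate points --- non-adjacency of $v$ and $t$, distinctness of the endpoints $y_i$, and the size bound on separators via $k$-connectedness --- are handled correctly.
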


\begin{lemma}[Gallai, 1963]\label{lemma_gallai-chords}
If $G$ is a graph in which each even cycle has at least two chords, then every block of $G$ is a complete graph or an odd cycle. 
\end{lemma}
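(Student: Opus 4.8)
The plan is to prove the equivalent statement that a $2$-connected graph $G$ in which every even cycle has at least two chords is either a complete graph or an odd cycle; the general case then follows because every block $B$ of $G$ is an induced subgraph (adding to $B$ an edge of $G$ between two of its vertices creates no separating vertex, so that edge already lies in $B$ by maximality), and a trivial block $K_1$ or $K_2$ is already complete. The structural fact I would use throughout is that the hypothesis is inherited by every induced subgraph $H$: a cycle of $H$ is a cycle of $G$, and since $H$ is induced its chords in $H$ coincide with its chords in $G$. In particular every chordless (induced) cycle of $G$ has fewer than two chords, hence must be \emph{odd}.

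I would argue by induction on $|V(G)|$ for $2$-connected $G$; the cases $|V(G)|\le 3$ are immediate. If $G$ is itself a cycle it is chordless, so by the observation above it is an odd cycle and we are done. Otherwise $G$ is $2$-connected and not a cycle, and I invoke the standard fact that such a graph has a vertex $v$ for which $H:=G-v$ is again $2$-connected (a consequence of the open ear decomposition; see \cite{Diestel}). By the induction hypothesis $H$ is complete or an odd cycle, and it remains to understand how $v$, with neighbourhood $S=N_G(v)$ and $|S|\ge 2$, attaches to $H$.

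If $H=K_n$ with $n\ge 3$, I claim $S=V(H)$, so that $G=K_{n+1}$: if some $z\in V(H)\setminus S$ existed, then for any $s_1,s_2\in S$ the $4$-cycle $v\,s_1\,z\,s_2\,v$ is even and, since $vz\notin E(G)$, its only chord is $s_1s_2$, contradicting the hypothesis. If instead $H$ is an odd cycle of length $m$, consider the $s:=|S|$ arcs (``gaps'') into which the vertices of $S$ split $H$, of lengths $g_1,\dots,g_s\ge 1$ with $\sum g_i=m$. The engine of the argument is two gadgets. First, for a gap of length $\ell\ge 2$ between consecutive $S$-vertices $a,b$, the cycle $v\,a\,(\text{arc})\,b\,v$ has length $\ell+2$ and at most one chord (only the edge $ab$, present iff $a,b$ are adjacent on $H$, can be a chord, since the arc is a chordless path carrying no further neighbour of $v$); hence $\ell$ cannot be even, so every gap of length $\ge 2$ is odd. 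As gaps of length $1$ are odd too, all gaps are odd, and since $\sum g_i=m$ is odd, $s$ must be odd, so $s\ge 3$. Second, for three consecutive $S$-vertices $a,b,c$ whose complementary arc (from $c$ back to $a$) has length $\ge 2$, the cycle $v\,a\,(\text{arc})\,b\,(\text{arc})\,c\,v$ has length $g+g'+2$ and its unique chord is $vb$ (the arcs are chordless and meet $S$ only in $a,b,c$, and $ac\notin E(G)$ because the complementary arc is $\ge 2$); since all gaps are odd, $g+g'$ is even, so this is again a forbidden even cycle.

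Putting the gadgets together finishes the odd-cycle case: if $s\ge 5$, any consecutive triple leaves a complementary arc of length $\ge s-2\ge 3$, so the second gadget yields a contradiction; if $s=3$ and the gaps are not all $1$, I choose the unique large gap to play the role of the complementary arc and apply the second gadget; the only surviving possibility is $s=3$ with all gaps equal to $1$, i.e. $m=3$ and $v$ adjacent to all of $H=K_3$, giving $G=K_4$. Hence in every case $G$ is complete or an odd cycle, completing the induction. The main technical obstacle is exactly this last step for $H$ an odd cycle: the chord count of a rerouted cycle is very sensitive to how the neighbours of $v$ lie along $H$, so the two gadgets must be arranged so that each constructed even cycle provably carries at most one chord, and the consecutive triple must be chosen (using $s\ge 3$) so that the complementary arc never collapses to length $1$. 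A secondary point to pin down carefully is the removable-vertex fact underlying the induction.
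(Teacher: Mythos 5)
The paper does not actually prove this lemma---it cites Gallai~\cite{Gal63a} and points to Cranston and Rabern~\cite{CraRa15} for a constructive treatment---so your attempt has to stand on its own, and it contains one genuine gap: the ``standard fact'' your induction rests on is false. It is \emph{not} true that every $2$-connected graph other than a cycle has a vertex $v$ for which $G-v$ is again $2$-connected. Take the theta graph obtained from the $6$-cycle $v_1v_2\dots v_6$ by adding the chord $v_1v_4$: deleting $v_1$ or $v_4$ leaves a path, and deleting any of $v_2,v_3,v_5,v_6$ leaves a vertex of degree $1$, so no deletion preserves $2$-connectivity. Ear decompositions do not rescue this: an internal vertex of the last ear can be deleted safely only when that ear has exactly one internal vertex, which need not ever occur. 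The classical deletable-vertex theorem (Chartrand--Kaugars--Lick) requires minimum degree $\ge 3$, which you may not assume: a priori a $2$-connected non-cycle graph satisfying your hypothesis could have degree-$2$ vertices, and excluding them is essentially part of what the lemma asserts. (The counterexample above does violate the even-cycle hypothesis---its $6$-cycle has only one chord---but your proof nowhere shows that graphs in your class admit a deletable vertex, so invoking the fact is circular or unsupported as written.)

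The remainder of your argument is correct and careful: the reduction to blocks as induced subgraphs, the heredity of the hypothesis under induced subgraphs, the $K_n$ attachment argument via the $4$-cycle $v\,s_1\,z\,s_2\,v$ with its single chord $s_1s_2$, and both odd-cycle gadgets---all gaps odd, hence $s$ odd and $s\ge 3$; the cycle through three consecutive $S$-vertices of even length $g+g'+2$ with unique chord $vb$, where you correctly exploit that $H=G-v$ is induced, so $H$ carries no edges beyond the cycle edges---together with the endgame ($s\ge 5$ impossible, $s=3$ forces all gaps equal to $1$ and $G=K_4$). To repair the proof you would need either to establish the deletable-vertex claim \emph{within your hypothesis class} (for instance, first showing that a $2$-connected non-cycle graph in the class has minimum degree $\ge 3$, then applying Chartrand--Kaugars--Lick), or to restructure the induction, e.g., removing the full interior of an ear (a maximal path of degree-$2$ vertices) at once and redoing your attachment analysis for a path rather than a single vertex; neither step is routine, so as it stands the proof is incomplete.
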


The proof of Theorem~\ref{theorem_main-result} is divided into two parts. In the first part, we obtain some properties of hard pairs and prove that hard pairs are not $f$-partitionable. The proof of the next proposition follows easily from the definition of hard pair and can be done via induction on the number of blocks of $D$.

\begin{proposition}\label{prop_digraph_block-hardpair}
Let $D$ be a connected digraph, let $p \geq 1$, and let $f :V(D) \to \mathbb{N}_0^p$ be a vector function such that $D$ is $f$-hard.  Then, for each $B \in \mathscr{B}(D)$ there is a uniquely determined function $f_B:V(B) \to \mathbb{N}_0^p$ such that the following statements hold:
\begin{itemize}
\item[\upshape (a)] $(B,f_B)$ is a hard pair of type {\upshape (M), (K),} or {\upshape(C)}.
\item[\upshape (b)] $f(v) = \sum_{B \in \mathscr{B}_v(H)}f_B(v)$ for all $v \in V(D)$. In particular, $f_B(v)=f(v)$ for all non-separating vertices $v$ of $D$ belonging to $B$.
\end{itemize}
\end{proposition}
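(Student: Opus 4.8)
The plan is to prove Proposition~\ref{prop_digraph_block-hardpair} by induction on the number of blocks of $D$, following exactly the recursive structure of the definition of hard pairs. The base case is when $D$ consists of a single block. Then $D$ is $f$-hard by way of one of the conditions (H1), (H2), or (H3), since (H4) strictly increases the block count; so $(D,f)$ is already a hard pair of type (M), (K), or (C). Setting $f_B = f$ for the unique block $B=D$ verifies both (a) and (b) trivially, as every vertex is non-separating and $\mathscr{B}_v(D) = \{D\}$.

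For the inductive step I would assume $D$ has at least two blocks and is $f$-hard. Since a single block cannot arise from (H4) and $D$ is not a single block, the pair $(D,f)$ must have been produced by (H4): there are hard pairs $(D^1,f^1)$ and $(D^2,f^2)$ merged at $v^1 \sim v^2 \to v$. The key structural fact I would establish first is that the blocks of $D$ are exactly the (images of the) blocks of $D^1$ together with those of $D^2$; merging at a single cut-vertex does not create a new block nor destroy any, since any block containing $v$ lies entirely within one of the two sides (a block is $2$-connected or a single arc, and $v$ is a separating vertex of $D$ unless one side is trivial). Each of $D^1, D^2$ has strictly fewer blocks than $D$, so the induction hypothesis applies to both, furnishing functions $f^1_{B}$ for $B \in \mathscr{B}(D^1)$ and $f^2_{B}$ for $B \in \mathscr{B}(D^2)$ satisfying (a) and (b) relative to $f^1$ and $f^2$. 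I would then \emph{define} $f_B$ on $D$ to be the corresponding $f^1_B$ or $f^2_B$ depending on which side $B$ came from, and check that (a) and (b) transfer. Part (a) is immediate since the blocks and their type-(M)/(K)/(C) decorations are inherited unchanged. For (b), the only vertex needing care is the merge vertex $v$: by the induction hypotheses $f^1(v^1) = \sum_{B \in \mathscr{B}_{v^1}(D^1)} f^1_B(v^1)$ and $f^2(v^2) = \sum_{B \in \mathscr{B}_{v^2}(D^2)} f^2_B(v^2)$, and since by (H4) $f(v) = f^1(v^1) + f^2(v^2)$ while $\mathscr{B}_v(D)$ is the disjoint union of the block sets through $v^1$ and $v^2$, the two sums combine to give exactly $f(v) = \sum_{B \in \mathscr{B}_v(D)} f_B(v)$.

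The main obstacle, and the point deserving the most care, is \textbf{uniqueness} of the functions $f_B$, rather than their existence. Existence is a routine unwinding of the recursion, but the statement asserts the $f_B$ are uniquely determined by $(D,f)$, independently of any particular (H4)-derivation. I would argue this by showing that $f_B$ is forced locally: on the non-separating (interior) vertices of $B$, condition (b) pins down $f_B(v) = f(v)$ since $\mathscr{B}_v(D) = \{B\}$; and on the separating vertices of $B$, the value $f_B(v)$ is constrained by the requirement that $(B, f_B)$ be a hard pair of type (M), (K), or (C). The crucial observation is that each of the three single-block types is \emph{homogeneous or near-homogeneous} in a way that determines $f_B$ on its cut-vertices from its values on interior vertices: for type (K) the function is constant across $V(B)$, for type (C) it is the fixed two-coordinate indicator, and for type (M) it is determined coordinatewise by the in/out-degrees within $B$. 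Hence once we know $B$, its type, and the values of $f_B$ at its interior vertices, the values at the cut-vertices are pinned down, giving uniqueness. I would phrase this last part carefully to ensure that the constant $n_i$ in (K), the index pair $\{k,\ell\}$ in (C), and the index $j$ in (M) are themselves uniquely recoverable, so that no ambiguity remains.
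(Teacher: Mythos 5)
Your overall strategy---an induction that follows the recursive definition of hard pairs---is exactly what the paper has in mind (it only remarks that the proposition ``can be done via induction on the number of blocks''), and your existence argument is essentially correct. One small repair is needed there: the assertion that (H4) strictly increases the block count is not literally true, because a single vertex with the all-zero function is itself a hard pair of type (M) (it must be, or Theorem~\ref{theorem_main-result} would fail for it), and merging it into $(D^1,f^1)$ returns $(D^1,f^1)$ unchanged; hence ``each of $D^1,D^2$ has strictly fewer blocks than $D$'' can fail for some (H4)-derivations. You should either induct on the length of the derivation, or pass to a shortest derivation, in which such trivial merges cannot occur at the top level once $D$ has at least two blocks.

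The more substantive gap is in your uniqueness argument. You propose to pin down $f_B$ at the separating vertices of $B$ from its values at the \emph{interior} (non-separating) vertices of $B$, using the rigidity of types (M), (K), (C). But a block need not have any interior vertices: take, for instance, a central bidirected triangle each of whose three vertices also lies in a pendant digon; all three vertices of the triangle block are separating vertices of $D$, so there is nothing local to anchor that block's type parameters (the index $j$, the pair $\{k,\ell\}$, or the $n_i$), and your argument says nothing about it. The fix is global rather than local: peel off an \emph{end-block} $B_0$, which has at most one separating vertex and at least two vertices, hence at least one interior vertex. The interior values force the type parameters and therefore all of $f_{B_0}$, including its value at the unique cut vertex $v_0$ (here one should verify that distinct types producing the same interior values---e.g.\ (K) with $n_k=n_\ell=1$ versus (C) on a triangle---still force the same \emph{function}, which they do). Then replace $f(v_0)$ by $f(v_0)-f_{B_0}(v_0)$, delete $V(B_0)\setminus\{v_0\}$, and recurse on the smaller block tree. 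This peeling argument yields uniqueness for every block, including the interior ones that your purely local argument cannot reach.
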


\begin{proposition} \label{prop_digraph_f-hard}
Let $D$ be a connected digraph, and let $f :V(D) \to \mathbb{N}_0^p$ be a vector function with $p \geq 1$. If $(D,f)$ is a hard pair, then the following statements hold:
\begin{itemize}
\item[\upshape(a)] $f_1(v) + f_2(v) + \ldots + f_p(v) = d_D^+(v)=d_D^-(v)$ for all $v \in V(D)$. As a consequence, $D$ is Eulerian.
\item[\upshape(b)] $D$ is not $f$-partitionable.
\end{itemize}
\end{proposition}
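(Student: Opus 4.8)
The plan is to prove both parts by induction on the number of blocks of $D$, leaning on the structural decomposition provided by Proposition~\ref{prop_digraph_block-hardpair}. For part (a), I would first verify the degree identity $f_1(v) + \ldots + f_p(v) = d_D^+(v) = d_D^-(v)$ directly for each of the three base types (M), (K), (C), which are single blocks. For type (M) this is immediate from the definition, since $f_j(v) = d_D^+(v) = d_D^-(v)$ and all other coordinates vanish. For type (K), $D$ is a bidirected complete graph on $|D|$ vertices, so every vertex has $d_D^+(v) = d_D^-(v) = |D|-1$, which equals $n_1 + \ldots + n_p$ by hypothesis. For type (C), a bidirected odd cycle, every vertex has in- and out-degree $2$, matching the two coordinates set to $1$. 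For the inductive step corresponding to (H4), I would use Proposition~\ref{prop_digraph_block-hardpair}(b): at any vertex $v$, the value $f(v)$ is the sum of the block-contributions $f_B(v)$ over blocks $B \in \mathscr{B}_v(D)$, and the in-/out-degrees split additively over blocks as well, so the identity at $v$ follows by summing the per-block identities. Eulerianness of $D$ is then an immediate corollary of $d_D^+(v) = d_D^-(v)$ holding at every vertex.

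For part (b), the non-partitionability, I would again argue by induction on the number of blocks, and here the base cases are the heart of the matter. Suppose toward a contradiction that $(D_1, \ldots, D_p)$ is an $f$-partition. For a type (M) pair, all the degeneracy budget sits in a single index $j$, so the partition must place \emph{every} vertex into $D_j$ (any vertex placed elsewhere would need $\min\{d^+, d^-\} < 0$, impossible); thus $D_j = D$ must be weakly $f_j$-degenerate with $f_j(v) = d_D^+(v) = d_D^-(v)$, and I must show an Eulerian block is not weakly $h$-degenerate when $h(v) = d^+(v) = d^-(v)$ everywhere. Here the non-empty subdigraph witnessing failure is $D$ itself: every vertex $v$ has $\min\{d_D^+(v), d_D^-(v)\} = d_D^+(v) = f_j(v)$, so no vertex satisfies the strict inequality. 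For type (K), a bidirected complete graph, I would show that with budgets $n_1, \ldots, n_p$ summing to $|D|-1$ and at least two nonzero, a greedy removal always gets stuck: restricted to $D_i$, which is a bidirected complete graph on $|D_i|$ vertices, weak $f_i$-degeneracy with $f_i \equiv n_i$ forces $|D_i| \le n_i$ (since the last vertex needs degree $< n_i$, i.e.\ $|D_i| - 1 < n_i$); summing gives $|D| = \sum |D_i| \le \sum n_i = |D|-1$, a contradiction. For type (C), the odd bidirected cycle with two active indices $k, \ell$ each of budget $1$, weak $1$-degeneracy means each $D_i$ must be acyclic, i.e.\ contain no bidirected edge as a $2$-cycle and no longer directed cycle; this reduces to the classical fact that an odd cycle is not $2$-colorable, so the two color classes cannot both induce independent (acyclic) sets.

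For the inductive step of part (b), given an $f$-hard $D$ with at least two blocks, I would pick an end-block $B$ with separating vertex $v$ (so $v$ is the unique cut vertex of $B$ in $D$) and write $D$ as obtained by merging $(B, f_B)$ with $(D', f')$, where $D' = D - (V(B) \setminus \{v\})$ carries the residual function $f'(v) = f(v) - f_B(v)$ and $f' = f$ elsewhere; by Proposition~\ref{prop_digraph_block-hardpair} both $(B, f_B)$ and $(D', f')$ are hard pairs with strictly fewer blocks. The argument is that any $f$-partition of $D$ restricts to an $f_B$-partition of $B$ \emph{and} an $f'$-partition of $D'$ \emph{provided} the budget at $v$ is allocated consistently — and the obstruction is precisely that the budget $f(v)$ split as $f_B(v) + f'(v)$ cannot be simultaneously honored on both sides. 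More carefully, I would argue that a putative $f$-partition of $D$ induces colorings on $B$ and on $D'$ where the vertex $v$ receives the same color class indices; by the inductive hypothesis neither $(B, f_B)$ nor $(D', f')$ is $f$-partitionable, but one must be careful that the \emph{shared} budget at $v$ is what makes this rigorous rather than a naive restriction. The cleanest route is to show the contrapositive at the level of degeneracy: if $D$ were weakly partitionable, one could peel off vertices of $D' - v$ and of $B - v$ independently until only configurations on $B$ (with the full $f_B$ budget at $v$) and on $D'$ remain, and then invoke the base-case rigidity.

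The step I expect to be the main obstacle is the inductive gluing at the cut vertex $v$ in part (b). A naive restriction of an $f$-partition to $B$ does \emph{not} immediately give an $f_B$-partition, because the degeneracy budget $f(v) = f_B(v) + f'(v)$ at $v$ is shared between the two sides, and weak degeneracy is a global (subdigraph-closed) condition rather than a local degree condition — so I cannot simply assert that $v$'s color behaves correctly on each side in isolation. The careful resolution is to track how much of $v$'s in-/out-degree is "used up" within $B$ versus within $D'$: since the blocks meet only at $v$ and arcs do not cross between $V(B) \setminus \{v\}$ and $V(D') \setminus \{v\}$, the degrees split cleanly as $d_D^\pm(v) = d_B^\pm(v) + d_{D'}^\pm(v)$, and I would exploit this additivity together with the tightness from part (a) ($f_B(v) = d_B^\pm(v)$ within its type) to force the color allocation at $v$ and reduce cleanly to the two smaller hard pairs. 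I anticipate that making this reduction fully rigorous — rather than hand-waving "restrict to each block" — will require the most care, and the precise statement of Proposition~\ref{prop_digraph_block-hardpair} about the additive decomposition of $f$ over blocks is exactly what makes it go through.
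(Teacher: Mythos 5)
Your overall strategy coincides with the paper's: induction on the number of blocks, direct verification of the base types (M), (K), (C), and a gluing argument at the merged vertex for (H4). Part (a) and all three base cases of part (b) are correct as you describe them; your type (K) computation ($|D| = \sum_i |D_i| \le \sum_i n_i = |D|-1$) is word for word the argument the paper gives, and your (M) and (C) cases are the "easy exercise" the paper omits.

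The one place where your proposal does not yet close is exactly the step you flag as the main obstacle, the inductive case of (b), and the resolution you sketch is not quite the right one. The paper argues as follows: suppose $D$, obtained by merging hard pairs $(D^1,f^1)$ and $(D^2,f^2)$ at $v$, had an $f$-partition $(D_1,\ldots,D_p)$ with $v\in V(D_1)$. For $i\ge 2$ the restrictions $D^j\cap D_i$ are weakly $f^j_i$-degenerate outright, so by the inductive hypothesis (not "base-case rigidity" --- the two smaller hard pairs need not be base cases) each $D^j\cap D_1$ fails to be weakly $f^j_1$-degenerate, yielding witness subdigraphs $\tilde D^1,\tilde D^2$ in which every vertex $w$ satisfies $\min\{d^+(w),d^-(w)\}\ge f^j_1(w)$. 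The crux is that the union $\tilde D=\tilde D^1\cup\tilde D^2$ is then a witness against $D_1$ itself, because at the merged vertex
\[
\min\bigl\{d^+_{\tilde D}(v),\,d^-_{\tilde D}(v)\bigr\}\;\ge\;\min\bigl\{d^+_{\tilde D^1}(v),\,d^-_{\tilde D^1}(v)\bigr\}+\min\bigl\{d^+_{\tilde D^2}(v),\,d^-_{\tilde D^2}(v)\bigr\}\;\ge\;f^1_1(v)+f^2_1(v)=f_1(v).
\]
This superadditivity of $\min$ is the single fact your "additivity of degrees at $v$" hand-wave needs and does not supply: additivity of $d^+$ and of $d^-$ separately is not enough, since the minimum in $\tilde D$ could a priori be realized by $d^+$ on one side and by $d^-$ on the other, and that is precisely what the displayed inequality rules out. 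Your alternative "peeling" route can also be made to work, but it needs the same inequality in contrapositive form (if $v$ cannot be peeled on either side, it cannot be peeled in the union); the appeal to "tightness from part (a)" is a red herring --- the paper's proof of (b) makes no use of (a) in the inductive step.
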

\begin{proof}
Statement (a) follows from an easy induction on the number of blocks of $D$. Furthermore, if $D$ is a block, then $(D,f)$ is of type (M), (K), or (C) and it is an easy exercise to check that $D$ is indeed not $f$-partitionable. To show the reader how it may be done, suppose that $(D,f)$ is of type (K), \emph{i.e.}, $D=D(K_n)$ and there are integers $n_1,n_2,\ldots{}, n_p \geq 0$ with at least two $n_i$ different from zero such that $n_1 + n_2 + \ldots + n_p = n-1$ and that $f(v)=(n_1,n_2,\ldots,n_p)$ for all $v \in V(D)$. As $D$ is a bidirected complete graph, an induced subdigraph $D_i$ of $D$ is weakly $f_i$-degenerate if and only if $|D_i| \leq n_i$. Consequently, if there exists an $f$-partition $(D_1,D_2,\ldots,D_p)$, we have $$n = |D| = |D_1| + |D_2| + \ldots + |D_p| \leq n_1 + n_2 + \ldots + n_p = n-1,$$ which is impossible. This shows that $(D,f)$ is not of type (K).

 Now assume that $D$ is not a block {and} let $(D,f)$ be a minimal counter-example, \emph{i.e.}, $(D,f)$ is a hard pair, $D$ admits an $f$-partition, and $|D|$ is minimum with respect to the previous two conditions. As $D$ is not a block it follows with (H4) that there are two hard pairs $(D^1,f^1)$ and $(D^2,f^2)$ with $|D^j| < |D|$ for $j \in \{1,2\}$ such that $(D,f)$ is obtained from $(D^1,f^1)$ and $(D^2,f^2)$ by merging vertices $v^j \in V(D^j)$ to a new vertex $v$. For the sake of readability, we {use $v$ below for $v^1=v^2=v$}. By the choice of $(D,f)$, the digraph $D^j$ is not $f^j$-partitionable for $j \in \{1,2\}$. Now let $(D_1,D_2,\ldots,D_p)$ be an $f$-partition of $D$ and let $D^j_i=D^j \cap D_i$ for $j \in \{1,2\}$ and $i \in [1,p]$. By symmetry, we may assume that $v \in V(D_1)$. Then, $D^j_i$ is strictly $f^j_i$-degenerate for all $i \in [2,p]$ and $j \in \{1,2\}$ (as $D^j_i \subseteq D_i$ and $f^j_i(w)=f_i(w)$ for all $w \in V(D^j_i)$). {As $D^j$ is not $f^j$-partitionable, it follows that} $D^j_1$ is not $f^j_1$-partitionable for $j \in \{1,2\}$ and so there are non-empty subdigraphs $\tilde{D}^j \subseteq D^j_1$ with $\min\{d_{\tilde{D}^j}^+(w),d_{\tilde{D}^j}^-(w)\} \geq f^j_1(w)$ for all $w \in V(\tilde{D}^j)$. Let $\tilde{D}=\tilde{D^1} \cup \tilde{D^2}$. If $v \not \in \tilde{D}$, then $\tilde{D}$ is the disjoint union of $\tilde{D^1}$ and $\tilde{D^2}$ and we clearly have  $\min\{d_{\tilde{D}}^+(w),d_{\tilde{D}}^-(w)\} \geq f_1(w)$ for all $w \in V(\tilde{D})$. If $v \in \tilde{D}$, we obtain that $$f_1(v)=f^1_1(v) + f^2_1(v) \leq \min\{d_{\tilde{D}^1}^+(v),d_{\tilde{D}^1}^-(v)\} + \min\{d_{\tilde{D}^2}^+(v),d_{\tilde{D}^2}^-(v)\} \leq \min \{d_{\tilde{D}}^+(v), d_{\tilde{D}}^-(v) \}.$$
Consequently, $\tilde{D}$ is a subdigraph of $D_1$ with $\min\{d_{\tilde{D}}^+(w),d_{\tilde{D}}^-(w)\} \geq f_1(w)$ for all $w \in V(\tilde{D})$ and so $D_1$ is not weakly $f_1$-degenerate, {contradicting the assumption that $(D_1,D_2,\ldots{},D_p)$ is an $f$-partition of $D$.} 
\end{proof}

Thus, the ``if''-direction of Theorem~\ref{theorem_main-result} is proved. The hard part, \emph{i.e.}, the ``only if''-direction, is covered in the next theorem.

\begin{theorem} \label{theorem_non-partitionable_implies_f-hard}
Let $D$ be a connected digraph, let $p \geq 1$ be an integer, and let $f :V(D) \to \mathbb{N}_0^p$  be a vector function such that $f_1(v) + f_2(v) + \ldots + f_p(v) \geq \max \{d_D^+(v),d_D^-(v)\}$ for all $v \in V(D)$. If $D$ is not $f$-partitionable, then $(D,f)$ is a hard pair.
\end{theorem}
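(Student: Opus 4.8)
The plan is to argue by contradiction, choosing a counterexample $(D,f)$ with $|D|$ minimum (breaking ties by, say, the number of arcs). Thus $D$ is connected and satisfies the degree hypothesis, $D$ is not $f$-partitionable, $(D,f)$ is not a hard pair, and every strictly smaller instance obeys the theorem. The engine I would use is a ``delete one vertex and put it back'' step. Fix any $v\in V(D)$. Every component of $D-v$ contains a neighbour $u$ of $v$, and such a $u$ has strictly smaller in- or out-degree in $D-v$ than in $D$, hence slack: $\sum_i f_i(u) > \max\{d_{D-v}^+(u),d_{D-v}^-(u)\}$. Since a hard pair has no slack vertex by Proposition~\ref{prop_digraph_f-hard}(a), no component of $D-v$ is a hard pair, so minimality forces each to be $f$-partitionable; hence $D-v$ has an $f$-partition $(D_1,\dots,D_p)$. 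As $D$ is not $f$-partitionable, for every $i$ the digraph $D_i+v$ is \emph{not} weakly $f_i$-degenerate, so it contains a witnessing subdigraph $H$ (necessarily through $v$) in which $\min\{d_H^+(v),d_H^-(v)\}\ge f_i(v)$; therefore $v$ has at least $f_i(v)$ out-neighbours and at least $f_i(v)$ in-neighbours inside $D_i$. Summing over $i$ yields $d_D^+(v)\ge\sum_i f_i(v)$ and $d_D^-(v)\ge\sum_i f_i(v)$, which together with the hypothesis forces equality everywhere. This gives two facts at every vertex: $D$ is Eulerian with $d_D^+(v)=d_D^-(v)=\sum_i f_i(v)$ (tightness), and in \emph{any} $f$-partition of $D-v$ the vertex $v$ has exactly $f_i(v)$ out- and exactly $f_i(v)$ in-neighbours in each part $D_i$ (rigidity).

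Next I would exploit rigidity to understand the local colour structure. The easy situation is when, on a block $B$ of $D$, the whole budget is concentrated in a single coordinate $j$ (so $f_j(v)=d_B^+(v)=d_B^-(v)$ there): such a block is exactly a hard pair of type \textbf{(M)}. Otherwise at least two coordinates are genuinely active, and rigidity provides the leverage for Kempe-type exchanges: given the fixed partition of $D-v$, I would move vertices between two active parts along paths supplied by the Fan Lemma (Lemma~\ref{lemma_fan-lemma}) in the $2$-connected graph $G(B)$, obtain a \emph{different} valid partition of $D-v$, and re-apply rigidity. Comparing the exact neighbour-counts before and after an exchange severely constrains how the neighbours of $v$ may be distributed, and the intended payoff is a chord condition: I would prove that, outside the monochromatic type-(M) case, every even cycle of $G(D)$ has at least two chords.

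With the chord condition established, Gallai's Lemma (Lemma~\ref{lemma_gallai-chords}) immediately gives that every block of $G(D)$ is a complete graph or an odd cycle. I would then return to the digraph: since each block is Eulerian and rigidity pins down exactly $f_i(v)$ in- and out-neighbours per part, the orientation is forced to be \emph{bidirected} in these cases (an Eulerian but non-bidirected orientation, such as a regular tournament, would be $f$-partitionable and is excluded by rigidity), and the budget vectors are forced into the exact forms of \textbf{(K)} for a bidirected complete block and \textbf{(C)} for a bidirected odd cycle; the remaining Eulerian blocks are the monochromatic type-\textbf{(M)} blocks, including directed cycles. Finally, the way the blocks meet at cut vertices, together with the additive block structure of Proposition~\ref{prop_digraph_block-hardpair} and the merging rule (H4), assembles $(D,f)$ into a hard pair, contradicting the choice of $(D,f)$ and completing the proof.

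I expect the main obstacle to be the middle step: converting the purely local ``exactly $f_i(v)$ neighbours in each part'' equalities into the global chord condition needed for Gallai's Lemma, and correctly isolating the type-(M) case. This is the genuine Brooks-type core, and it is harder here than in the undirected setting for two reasons: the directedness requires controlling in- and out-neighbourhoods \emph{simultaneously} (and forcing the bidirected structure), and the vector-valued, variable budget $f$ means that every Kempe exchange must respect each coordinate separately. By comparison, the delete-and-readd derivation of tightness and rigidity, the final per-block identification, and the assembly via merging should be comparatively routine given Propositions~\ref{prop_digraph_block-hardpair} and~\ref{prop_digraph_f-hard}.
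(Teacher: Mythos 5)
Your skeleton coincides with the paper's: minimal counterexample, delete-and-readd to obtain an $f$-partition of $D-v$, the tightness identity $d_D^+(v)=d_D^-(v)=\sum_i f_i(v)$ together with the rigidity statement $d^+_{D_i+v}(v)=d^-_{D_i+v}(v)=f_i(v)$, then exchanges, Gallai's lemma, identification of the block types, and assembly via merging. Your derivation of tightness and rigidity is correct and is exactly Claim~\ref{claim_non-partitionable_eulerian}(a),(b). However, two genuine gaps sit in the parts you defer. The first is the exchange mechanism itself. Rigidity alone does not licence ``moving vertices between two parts along paths'': the missing idea is Claim~\ref{claim_non-partitionable_eulerian}(c),(d), namely that for any neighbour $u$ of $v$ in $D_i$ the digraph $(D_i+v)-u$ is again weakly $f_i$-degenerate, so that colouring $v$ and uncolouring $u$ is a legal single-vertex shift. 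The proof is short but not automatic: by the degree equality in (a), every witnessing subdigraph of $D_i+v$ must contain \emph{all} neighbours of $v$ in its component, in particular $u$. All exchanges in the paper are iterations of this shift around a cycle through the uncoloured vertex, not Kempe swaps along paths, and the same fact yields that the whole component of $D_i+v$ containing $v$ is Eulerian with degrees $f_i$ --- which you need both for the chord argument and for the cut-vertex case.

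The second gap is one of ordering and of underestimating the cut-vertex reduction. You plan to prove the chord condition for every even cycle of $G(D)$ and only afterwards assemble the blocks, but the Fan Lemma (Lemma~\ref{lemma_fan-lemma}) argument requires a $2$-connected graph \emph{and} a vertex of a second colour lying in the same block as the cycle; the statement ``at least two colour classes are non-empty'' is global and does not localise to a block of a non-$2$-connected $D$. So the reduction to $D$ being a block (Claim~\ref{claim_block}) must come first, and it is not routine: at a cut vertex $v$ splitting $D$ into $D^1,D^2$ one must set $f^j_i(v)=d^+_{D_i\cap D^j}(v)$, use the Eulerian-component property to check this equals the corresponding in-degree, verify the degree hypothesis for each $(D^j,f^j)$, and then show that if either piece were $f^j$-partitionable its partition could be spliced with the partition of the other piece into an $f$-partition of $D$ --- a splicing that needs a careful degree count at $v$. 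Only after that failure does minimality make both pieces hard pairs and (H4) apply. Also note that Proposition~\ref{prop_digraph_block-hardpair}, which you invoke for the assembly, presupposes that $(D,f)$ is already a hard pair, so it cannot drive the decomposition. The rest of your outline (Gallai, ruling out the odd cycle and forcing bidirectedness of the complete block via further shifts) matches the paper.
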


\begin{proof}
The proof is by reductio ad absurdum. So let $(D,f)$ be a smallest counterexample, that is,
\begin{itemize}
\item[\upshape (1)] $f_1(v) + f_2(v) + \ldots + f_p(v) \geq \max \{d_D^+(v),d_D^-(v)\}$ for all $v \in V(D)$,
\item[\upshape (2)] $D$ is not $f$-partitionable,
\item[\upshape (3)] $(D,f)$ is not a hard pair, and
\item[\upshape (4)] $|D|$ is minimum subject to (1),(2), and (3).
\end{itemize}
In order to derive a contradiction, we establish a sequence of eight claims.
\begin{claim}\label{claim_D-v}
{$D-v$ is $f$-partitionable for every $v\in V(D)$}.
\end{claim}
\begin{proof2}
{Suppose that $D-v$ is not $f$-partitionable for some $v\in V(D)$. Then} there is a component $D'$ of $D-v$ such that $D'$ is not $f$-partitionable and so $(D',f)$ is a hard pair (by (4)). As $D$ is connected, there is a neighbor $u$ of $v$ in $D'$. Since $(D',f)$ is a hard pair, Proposition~\ref{prop_digraph_f-hard}(a) implies that $f_1(u)+f_2(u)+ \ldots + f_p(u) =d_{D'}^+(u)=d_{D'}^-(u)$. As $D$ contains at least one of the arcs $vu$ and $uv$, we conclude that 
$$f_1(u)+f_2(u)+ \ldots + f_p(u) =d_{D'}^+(u)=d_{D'}^-(u) < \max \{d_D^+(u),d_D^-(u)\},$$
{contradicting (1)}.
\end{proof2}

The next claim is central for the proof of Theorem~\ref{theorem_non-partitionable_implies_f-hard}. Casually speaking, it says that not only is $D$ Eulerian and in- and out-degree of each vertex coincides with the sum of its $f$-values, but also, given a fixed vertex $v$ and an $f$-partition $(D_1,D_2,\ldots,D_p)$ of $D-v$, the component of each $D_i + v$ containing $v$ is Eulerian, too, and the respective degrees coincide with the $f_i$-values.  
 \begin{claim} \label{claim_non-partitionable_eulerian}
 Let $v \in V(D)$ be an arbitrary vertex, let $(D_1,D_2,\ldots,D_p)$ be an $f$-partition of $D-v$, and let $i \in [1,p]$. Then, the {following hold}:
 \begin{itemize}
 \item[\upshape (a)] $d_{D_i + v}^+(v) = d_{D_i+v}^-(v)=f_i(v)$.
 \item[\upshape (b)] $f_1(v) + f_2(v) + \ldots + f_p(v) = d_D^+(v) = d_D^-(v)$. As a consequence, $D$ is Eulerian.
 \item[\upshape (c)] Let $u$ be a neighbor of $v$ in $D_i$. Then, the sequence $(D_1',D_2', \ldots, D_p')$ with $D_i'=(D_i + v) - u$ and $D_j' = D_j$ for {$j \neq i$}  is an $f$-partition of $D-u$.
 \item[\upshape (d)] The component $D'$ of $D_i + v$ that contains $v$ is Eulerian and $d_{D'}^+(w) = d_{D'}^-(w)=f_i(w)$ for all $w \in V(D')$.
 \end{itemize}
 \end{claim}
 \begin{proof2}
Let $i \in [1,p]$ be arbitrary. As $D$ is not $f$-partitionable, the digraph $D_i + v$ is not weakly $f_i$-degenerate. Thus, there is a subdigraph $D''$ of $D_i + v$ such that $\min \{d_{D''}^+(w), d_{D''}^-(w)\} \geq f_i(w)$ for all $w \in V(D'')$. Clearly, $D''$ must contain $v$ (as $D_i$ is weakly $f_i$-degenerate) and so
 $$f_i(v) \leq \min \{d_{D''}^+(v), d_{D''}^-(v)\} \leq \min \{d_{D_i + v}^+(v), d_{D_i + v}^-(v)\}.$$
 Since $i$ was chosen arbitrarily, we conclude that
\begin{align*}
 \sum_{i\in [1,p]} f_i(v) & \leq \sum_{i\in [1,p]} \min \{d_{D_i + v}^+(v), d_{D_i + v}^-(v)\}  \leq \min \{d_D^+(v), d_D^-(v)\}\\
 & \leq \max \{d_D^+(v), d_D^-(v)\}  \leq  \sum_{i\in [1,p]} f_i(v)
\end{align*} 
and so we have equality everywhere. Thus, (a) and (b) {hold}. 

For the proof of (c) and (d), let $D'$ be the component of $D_i + v$ containing $v$, and let $u$ be a neighbor of $v$ in $D_i + v$ and therefore in $D'$ (if such a vertex does not exist, then $D'$ consists only of $v$, and by (a) $f_i(v) = 0=d_{D_i + v}^+(v) = d_{D_i + v}^- (v)$; so there is nothing to prove). Again, let $D''$ be the subdigraph of $D_i + v$ with $\min \{d_{D''}^+(w), d_{D''}^-(w)\} \geq f_i(w)$ for all $w \in V(D'')$. By (a),
$$f_i(v) = d_{D_i + v}^+(v) = d_{D_i + v}^- (v) \geq \max \{d_{D''}^+(v), d_{D''}^-(v)\}\geq f_i(v),$$
implying that the digraph $D''$ must contain all neighbors of $v$ in $D'$ and, in particular, $D''$ contains $u$. Consequently, $(D_i + v) - u$ is weakly $f_i$-degenerate (as $D''$ was chosen arbitrarily and so any ''bad`` subdigraph must contain $u$) and, hence, $(D_1',D_2',\ldots,D_p')$ with $D_i'=(D_i + v) - u$ and $D_j'=D_j$ for {$j \neq i$} is an $f$-partition of $D-u$, which proves (c). Clearly, the component of $D_i' + u$ containing $u$ is still $D'$ and so applying statement (a) to  the vertex $u$ and the partition $(D_1',D_2',\ldots,D_p')$ leads to $d_{D_i + v}^+ (u) =d_{D_i + v}^- (u) = f_i(u)$ (as $D_i + v = D_i' + u$). Note that if $z$ is a neighbor of $u$ in $D'$ we can swap $u$ and $z$, \emph{i.e.}, regard the $f$-partition $(D_1'', D_2'', \ldots, D_p'')$ with $D_i''=(D_i' + u) - z$ and $D_j''=D_j'=D_j$ for $j \neq i$, and obtain the same {conclusion} for $z$. By repeating this procedure, we eventually reach every vertex of $D'$ and so (d) follows.
\end{proof2}
 \begin{claim} \label{claim_block}
 $D$ is a block.
 \end{claim}
 \begin{proof2}
 Suppose, to the contrary, that $D$ is the union of two induced subdigraphs $D^1$ and $D^2$ with $V(D^1) \cap V(D^2) = \{v\}$ and $|D^j| < |D|$ for $j \in \{1,2\}$. We will model two functions $f^1$ and $f^2$ such that $(D^1,f^1)$ and $(D^2,f^2)$ are hard pairs and $(D,f)$ is obtained from the two hard pairs via the merging operation, thereby giving us the desired contradiction. To this end, let $(D_1,D_2,\ldots,D_p)$ be an $f$-partition of $D-v$ and, for $i \in [1,p]$ and $j \in \{1,2\}$, let $D_i^j=D_i \cap D^j$. Clearly, the digraphs $D_i^1$ and $D_i ^2$ are disjoint for all $i \in [1,p]$. Now let $i \in [1,p]$ and let $D'$ be the component of $D_i + v$ containing $v$. Then, $$D'=(D' \cap (D_i^1 + v)) \cup (D' \cap (D_i^2 + v)).$$ Note that $(D' \cap (D_i^1 + v))$ and $(D' \cap (D_i^2 + v))$ have only $v$ in common and that there are no arcs between vertices of $D_i^1$ and $D_i^2$. By Claim~\ref{claim_non-partitionable_eulerian}(d), we have $d_{D'}^+(w)=d_{D'}^-(w)=f_i(w)$ for all $w \in V(D')$. Thus, it follows from the above observation that 
$$f_i (w) = d_{D'}^+(w) = d_{D'}^-(w) = d_{(D' \cap (D_i^j + v))}^+(w)=d_{(D' \cap (D_i^j + v))}^-(w)$$ 
 for $j \in \{1,2\}$ and all $w \in V(D_i^j)$. Consequently, all vertices from $D' \cap (D_i^j + v))$ besides $v$ are Eulerian in $D' \cap (D_i^j + v))$. Since in each digraph, the sum of out-degrees over all vertices equals the sum of in-degrees over all vertices, we conclude that vertex $v$ is also Eulerian in $D' \cap (D_i^j + v)$ and, therefore, in $D_i^j + v$ for $j \in \{1,2\}$, \emph{i.e.} 
\begin{align}
d_{D_i^j + v}^+ (v) =d_{(D' \cap (D_i^j + v))}^+(v)=d_{(D' \cap (D_i^j + v))}^-(v)= d_{D_i^j + v}^- (v).
\label{align_d_D_i^j^+=d_D_i^j^-}
\end{align}

 This gives us a nice way to define the functions $f^1$ and $f^2$. For $i \in [1,p]$ and $j \in \{1,2\}$ let
 \begin{align}f_i^j(w)=
 \begin{cases}
 f_i(w) \quad \text{if } w \in V(D^j) \setminus \{v\}, \text{ and}\\
 d^+_{D_i^j}(v) \quad \text{if } w=v.
\end{cases}
\label{align_definition_f_i}
\end{align}
Then, for $j \in \{1,2\}$ and all $w \in V(D^j) \setminus \{v\}$ we have 
\begin{IEEEeqnarray}{rCl}
f_1^j(w) + f_2^j(w) \ldots + f_p^j(w)  & =  & f_1(w) + f_2(w) + \ldots + f_p(w) \nonumber \\
& = & \max\{d_D^+(w), d_D^-(w)\} = \max \{d_{D^j}^+(w), d_{D^j}^-(w)\}.
\label{align_f_values_block_claim}
\end{IEEEeqnarray}
Moreover, since $v$ is Eulerian in $D_i^j + v$ we obtain 
\begin{align}
d_{D^j + v}^+(v) = \sum_{i \in [1,p]} d_{D_i^j + v}^+ (v) =  \sum_{i \in [1,p]} d_{D_i^j + v}^- (v) =d_{D^j + v}^-(v),
\label{align_d_D^j+v}
\end{align}
and by the choice of $f^j(v)$, it follows that 
\begin{align}
f^j_1(v) + f^j_2(v) + \ldots + f^j_p(v) = d_{D^j } ^+ (v) = d_{D^j}^-(v).
\label{align_d_D^j^-v}
\end{align}
As a consequence, { both $(D^1,f^1)$ and $(D^2,f^2)$  fulfil} the requirements of {Theorem \ref{theorem_non-partitionable_implies_f-hard}}. 

First assume that for some $j \in \{1,2\}$, the digraph $D^j$ is $f^j$-partitionable. By symmetry, we may assume $j=1$. Then, $D^1$ admits an $f^1$-partition $(D_1',D_2',\ldots,D_p')$ and, by symmetry, $v \in V(D_1')$. Let $(D_1^*,D_2^*,\ldots,D_p^*)$ be a partition of $D$ with $D_1^* = D_1' \cup (D_1^2 + v)$ and $D_i^*= D_i' \cup D_i^2$ for $i \in [2,p]$. As $D_i'$ and $D_i^2$ are disjoint and as $(D_1^2,D_2^2, \ldots, D_p^2)$ is an $f$-partition of $D^2 - v$ , $D_i^*$ is weakly $f_i$-degenerate for $i \in [2,p]$. We claim that $D_1^*$ is weakly $f_1$-degenerate. To this end, let $\tilde{D}$ be a non-empty subdigraph of $D_1^*$. If $\tilde{D}$ is a subdigraph of $D_1^2$, then $\tilde{D}$ is weakly $f_1$-degenerate (as $D_1^2$ is weakly $f_1$-degenerate). So assume that $\tilde{D}^1 = \tilde{D} \cap D^1$ is non-empty. Since $\tilde{D}^1$ is a subdigraph of $D_1'$ and therefore weakly $f_1^1$-degenerate, there is a vertex $w \in V(\tilde{D}^1)$ with $\min \{d_{\tilde{D}^1}^+(w), d_{\tilde{D}^1}^-(w)\} < f_1^1(w)$. If $w \neq v$, then 
\begin{align}
\min \{d_{\tilde{D}}^+(w), d_{\tilde{D}}^-(w)\} = \min \{d_{\tilde{D}^1}^+(w), d_{\tilde{D}^1}^-(w)\} < f_1^1(w) = f_1(w),
\label{align_w=v}
\end{align}
 and we are done. It remains to consider the case that $w=v$. Since $d_{D_1^2 + v}^+(v) = d_{D_1^2 + v}^-(v) = f_1^2(v)$ (by \eqref{align_d_D_i^j^+=d_D_i^j^-} and \eqref{align_definition_f_i}), this implies that
$$\min\{d_{\tilde{D}}^+(v), d_{\tilde{D}}^-(v)\} \leq \min\{d_{\tilde{D}^1}^+(v), d_{\tilde{D}^1}^-(v)\} + d_{D_1^2}^+(v) < f_1^1(v) + f_1^2(v) = f(v).$$
Consequently, $D_1^*$ is weakly $f_1$-degenerate, as claimed, and so $(D_1^*,D_2^*,\ldots,D_p^*)$ is an $f$-partition of $D$, which is impossible. 

Thus, $D^j$ is not $f^j$-partitionable for $j \in \{1,2\}$ and, hence { by the minimality of $D$}, both $(D^1,f^2)$ and $(D^2,f^2)$ are hard pairs  and $(D,f)$ is obtained from $(D^1,f^2)$ and $(D^2,f^2)$ via the merging operation. As a consequence, $(D,f)$ is a hard pair, contradicting (3). This contradiction completes the proof of the claim.
\end{proof2}

By the above claim, $D$ is a block. Hence it remains to show that $(D,f)$ is a hard pair of type (M), (K), or (C), giving us the desired contradiction. The next claim eliminates the pairs of type (M).
\begin{claim} \label{claim_two-colors}
For every $v \in V(D)$ and each $f$-partition $(D_1,D_2,\ldots,D_p)$ of $D-v$, there are two indices $i \neq j$ from $[1,p]$ such that $D_i$ and $D_j$ are non-empty.
\end{claim}
\begin{proof2}
{Suppose} that there is a vertex $v \in V(D)$ and a partition $(D_1,D_2,\ldots,D_p)$ of $D-v$ such that exactly one part of the partition is non-empty, say $D_1$. {Then } $D_1 + v = D$ and so it follows from Claim~\ref{claim_non-partitionable_eulerian}(d) {and the fact that $D$ is connected} that $d_D^+(w)=d_D^-(w)=f_1(w)$ for all $w \in V(D)$. {Thus it follows from  Claim~\ref{claim_non-partitionable_eulerian}(b) that } $f_j(w)=0$ for all $j \neq i$ from $[1,p]$ and for all $w \in V(D)$  and $(D,f)$ is a hard pair of type (M), {contradicting (3)}.
\end{proof2}

Actually, Claim~\ref{claim_non-partitionable_eulerian}(c) provides us with a powerful tool that we shall use in the following. Let $v \in V(D)$ be an arbitrary vertex and let $(D_1,D_2,\ldots,D_p)$ be an $f$-partition of $D-v$. Moreover, let $u \in V(D)$ be a neighbor of $v$. Then, $u \in V(D_i)$ for some $i \in [1,p]$ and Claim~\ref{claim_non-partitionable_eulerian}(c) implies that replacing $D_i$ with $(D_i + v) - u$ leads to an $f$-partition of $D-u$ in which $v$ is contained in what was previously $D_i$. Thus, we can swap $v$ and any neighbor $u$ of $v$ and obtain a new $f$-partition of $D-u$. In order to make this observation a bit more graphical, we introduce the following terms. Given a vertex $v \in V(D)$, we call an $f$-partition $(D_1,D_2,\ldots,D_p)$ of $D-v$ an $f$-\textbf{coloring} $\varphi$ of {$D-v$}. Moreover, for $w \in V(D)\setminus \{v\}$, we set $\varphi(w)=i$ if $w \in V(D_i)$ and say that $w$ has \textbf{color} $i$. Finally, we say that the vertex $v$ is \textbf{uncolored}.
The following Claim is {just a}  reformulation of Claim~\ref{claim_non-partitionable_eulerian}(a) and Claim~\ref{claim_two-colors} to fit the new terminology.

\begin{claim}\label{claim_degree_in_D_i}
Let $v \in V(D)$ be an arbitrary vertex and let $\varphi$ be an $f$-coloring of $D-v$. Then, the following statements hold:
\begin{itemize}
\item[\upshape (a)] At least two color-classes of $\varphi$ are non-empty.
\item[\upshape (b)] $d_{D[\varphi^{-1}(\{i\}) ]+v}^+(v) = d_{D[\varphi^{-1}(\{i\})] +v}^-(v) =f_i(v) \text{ for all } i \in [1,p].$
\end{itemize}
\end{claim}

Using this terminology, the above described method of swapping two vertices is nothing else than assigning to $v$ the color of some neighbor $u$ of $v$ and uncoloring $u$. We will call this process \textbf{shifting} the color from $u$ to $v$. Note that this leads to an $f$-coloring of $D-u$. The original idea of shifting goes back to \textsc{Gallai}~\cite{Gal63a}. 

Now let $C$ be a cycle in $G(D)$, let $v \in V(C)$ be an arbitrary vertex, and let $\varphi$ be an $f$-coloring of $D-v$. Moreover, let $u$ and $w$ be the vertices such that $uv$ and $vw$ are in $E(C)$. Then, we can shift the color from $u$ to $v$. Afterwards, we shift the color from the other neighbor of $u$ on $C$ to $u$. Continuing like this, we can shift the color of each vertex of $C$, one after another, clockwise, until eventually we shift the color from $v$ to $w$. This gives us  a new $f$-coloring $\varphi'$ of $D-v$ (see Figure~\ref{fig_shifting-example}). In particular, $\varphi'(w)=\varphi(u)$. 

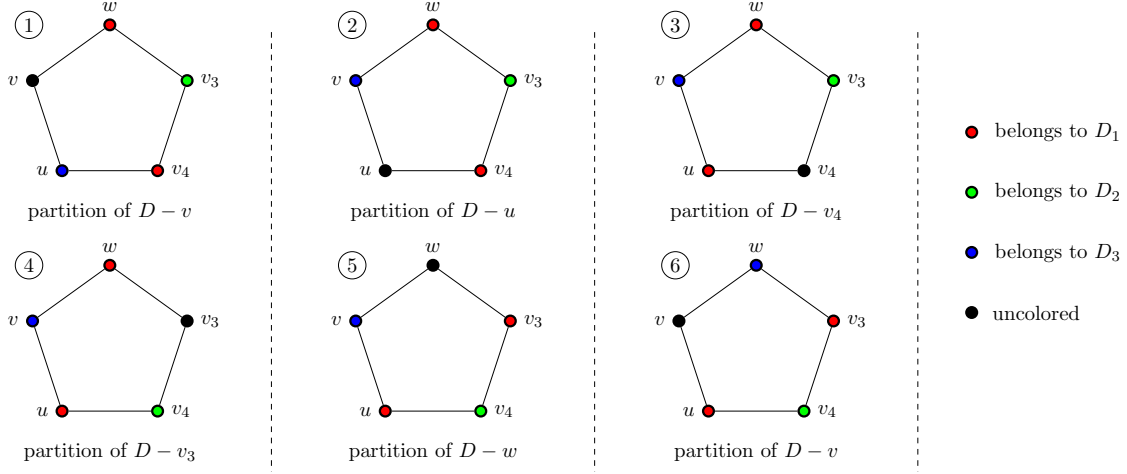
\begin{figure}[H]
\centering
\resizebox{\linewidth}{!}{
\begin{tikzpicture}[>={[scale=1.1]Stealth}]
\node[draw=none,minimum size=3cm,regular polygon,regular polygon sides=5] (a) {};

\node[vertex, fill=red, label={above:$w$}] (a1) at (a.corner 1) {};
\node[vertex, label={left:$v$}] (a2) at (a.corner 2){};
\node[vertex, fill=blue, label={left:$u$}] (a3) at (a.corner 3){};
\node[vertex, fill=red, label={right:$v_4$}] (a4) at (a.corner 4){};
\node[vertex, fill=green, label={right:$v_3$}] (a5) at (a.corner 5){};
\node[circle, inner sep=2pt, draw=black] at (a1) [xshift=-1.5cm]{$1$};
\node (l1) at (a1) [yshift=-3.5cm] {partition of $D-v$};

\path[-]
(a1) edge [-] (a2)
(a2) edge [-] (a3)
(a3) edge [-] (a4)
(a4) edge [-] (a5)
(a1) edge [-] (a5);

\node (h1) at (a1) [xshift=3cm]{};
\node (h2) at (h1) [yshift=-8.5cm]{};
\draw[dashed] (h1) -- (h2);

\begin{scope}[xshift=6cm]{};
\node[draw=none,minimum size=3cm,regular polygon,regular polygon sides=5] (a) {};

\node[vertex, fill=red, label={above:$w$}] (a1) at (a.corner 1) {};
\node[vertex, fill= blue, label={left:$v$}] (a2) at (a.corner 2){};
\node[vertex, label={left:$u$}] (a3) at (a.corner 3){};
\node[vertex, fill=red, label={right:$v_4$}] (a4) at (a.corner 4){};
\node[vertex, fill=green, label={right:$v_3$}] (a5) at (a.corner 5){};

\path[-]
(a1) edge [-] (a2)
(a2) edge [-] (a3)
(a3) edge [-] (a4)
(a4) edge [-] (a5)
(a1) edge [-] (a5);

\node (h1) at (a1) [xshift=3cm]{};
\node (h2) at (h1) [yshift=-8.5cm]{};
\draw[dashed] (h1) -- (h2);
\node (l1) at (a1) [yshift=-3.5cm] {partition of $D-u$};
\node[circle, inner sep=2pt, draw=black] at (a1) [xshift=-1.5cm]{$2$};
\end{scope}

\begin{scope}[xshift=12cm]{};
\node[draw=none,minimum size=3cm,regular polygon,regular polygon sides=5] (a) {};

\node[vertex, fill=red, label={above:$w$}] (a1) at (a.corner 1) {};
\node[vertex, fill= blue, label={left:$v$}] (a2) at (a.corner 2){};
\node[vertex, fill=red, label={left:$u$}] (a3) at (a.corner 3){};
\node[vertex, label={right:$v_4$}] (a4) at (a.corner 4){};
\node[vertex, fill=green, label={right:$v_3$}] (a5) at (a.corner 5){};

\path[-]
(a1) edge [-] (a2)
(a2) edge [-] (a3)
(a3) edge [-] (a4)
(a4) edge [-] (a5)
(a1) edge [-] (a5);

\node (l1) at (a1) [yshift=-3.5cm] {partition of $D-v_4$};
\node[circle, inner sep=2pt, draw=black] at (a1) [xshift=-1.5cm]{$3$};
\end{scope}

\begin{scope}[yshift=-4.5cm]{};
\node[draw=none,minimum size=3cm,regular polygon,regular polygon sides=5] (a) {};

\node[vertex, fill=red, label={above:$w$}] (a1) at (a.corner 1) {};
\node[vertex, fill= blue, label={left:$v$}] (a2) at (a.corner 2){};
\node[vertex, fill=red, label={left:$u$}] (a3) at (a.corner 3){};
\node[vertex, fill=green, label={right:$v_4$}] (a4) at (a.corner 4){};
\node[vertex, label={right:$v_3$}] (a5) at (a.corner 5){};

\path[-]
(a1) edge [-] (a2)
(a2) edge [-] (a3)
(a3) edge [-] (a4)
(a4) edge [-] (a5)
(a1) edge [-] (a5);

\node (l1) at (a1) [yshift=-3.5cm] {partition of $D-v_3$};
\node[circle, inner sep=2pt, draw=black] at (a1) [xshift=-1.5cm]{$4$};
\end{scope}

\begin{scope}[yshift=-4.5cm, xshift=6cm]{};
\node[draw=none,minimum size=3cm,regular polygon,regular polygon sides=5] (a) {};

\node[vertex, label={above:$w$}] (a1) at (a.corner 1) {};
\node[vertex, fill= blue, label={left:$v$}] (a2) at (a.corner 2){};
\node[vertex, fill=red, label={left:$u$}] (a3) at (a.corner 3){};
\node[vertex, fill=green, label={right:$v_4$}] (a4) at (a.corner 4){};
\node[vertex, fill=red, label={right:$v_3$}] (a5) at (a.corner 5){};

\path[-]
(a1) edge [-] (a2)
(a2) edge [-] (a3)
(a3) edge [-] (a4)
(a4) edge [-] (a5)
(a1) edge [-] (a5);

\node (l1) at (a1) [yshift=-3.5cm] {partition of $D-w$};
\node[circle, inner sep=2pt, draw=black] at (a1) [xshift=-1.5cm]{$5$};
\end{scope}

\begin{scope}[yshift=-4.5cm, xshift=12cm]{};
\node[draw=none,minimum size=3cm,regular polygon,regular polygon sides=5] (a) {};

\node[vertex, fill=blue, label={above:$w$}] (a1) at (a.corner 1) {};
\node[vertex, label={left:$v$}] (a2) at (a.corner 2){};
\node[vertex, fill=red, label={left:$u$}] (a3) at (a.corner 3){};
\node[vertex, fill=green, label={right:$v_4$}] (a4) at (a.corner 4){};
\node[vertex, fill=red, label={right:$v_3$}] (a5) at (a.corner 5){};

\path[-]
(a1) edge [-] (a2)
(a2) edge [-] (a3)
(a3) edge [-] (a4)
(a4) edge [-] (a5)
(a1) edge [-] (a5);

\node (l1) at (a1) [yshift=-3.5cm] {partition of $D-v$};
\node[circle, inner sep=2pt, draw=black] at (a1) [xshift=-1.5cm]{$6$};
\end{scope}

\begin{scope}[yshift=-2.25cm, xshift=16cm]
\node[vertex, yshift=.625cm, fill=green] (c2) {};
\node at (c2) [xshift=1.6cm] {belongs to $D_2$};
\node[vertex, fill=red] (c1) at (c2) [yshift=1.125cm]{};
\node at (c1) [xshift=1.6cm] {belongs to $D_1$};
\node[vertex, fill=blue] (c3) at (c2)[yshift=-1.125cm]{};
\node at (c3) [xshift=1.6cm] {belongs to $D_3$};
\node[vertex, fill=black] (c4) at (c3)[yshift=-1.125cm]{};
\node at (c4) [xshift=1.2cm] {uncolored};
\node (h3) at (h1) [xshift=6cm]{};
\node (h4) at (h2) [xshift=6cm]{};
\draw[dashed] (h3) -- (h4);
\end{scope}

\end{tikzpicture}
}
\caption{Clockwise shifting of colors around a cycle in $G(D)$.}
\label{fig_shifting-example}
\end{figure}

Similarly, starting from $\varphi$ with shifting the color from $w$ to $v$, we can shift the color of each vertex counter-clockwise on the cycle and obtain a third  $f$-coloring of $D-v$. By repeated clockwise, respectively counter-clockwise shifting, it is easy to see that the following claim is true.

\begin{claim} \label{claim_shifting_consecutive_vertices}
Let $C$ be a cycle in $G(D)$, let $v \in V(C)$ be an arbitrary vertex, and let $\varphi$ be an $f$-coloring of $D-v$. Moreover, let $u$ and $w$ be {the neighbours of $v$ on $C$, that is,} $\{uv,vw\} \subseteq E(C)$. Then, for any pair $v_1,v_2$ of vertices distinct from $v$ with $v_1v_2 \in E(C)$ and for each $i \in \{1,2\}$ there is an $f$-coloring $\varphi^*$ of $D-v$ such that
 $\varphi^*(u)=\varphi(v_i)$ and $\varphi^*(w)=\varphi(v_{3-i}).$
\end{claim}

{Using} Claims \ref{claim_degree_in_D_i} and~\ref{claim_shifting_consecutive_vertices}, we are able to prove the next claim.
\begin{claim}\label{claim_no-chords}
Let $C$ be a cycle in $G(D)$ and let $v \in V(C)$ be a vertex of $C$ that is not contained in a chord of $C$ in $G(D)$. Then, $C$ is an odd cycle and there is an $f$-coloring $\varphi^*$ of $D-v$ and indices $k \neq \ell$ from $[1,p]$ such that the vertices of $C-v$ are colored alternately with $k$ and $\ell$.
\end{claim}
\begin{proof2}
Let $u$ and $w$ be the vertices with $\{uv, vw\} \subseteq E(C)$. As $v$ is not contained in a chord, $u$ and $w$ are the only neighbors of $v$ from $V(C)$ in $G(D)$. We first claim that there is an $f$-coloring $\varphi^*$ of $D-v$ with $\varphi^*(u) \neq \varphi^*(w)$. To this end, let $\varphi$ be an $f$-coloring of $D-v$ and assume that $\varphi(u) = \varphi(w)$. From Claim~\ref{claim_degree_in_D_i}(a) we know that at least two color classes of $\varphi$ are non-empty. Thus, in $D-v$ there is a vertex $z$ with $\varphi(z) \neq \varphi(u)$. As $C$ is a cycle contained in the block $G(D)$, we have $|G(D)| \geq 3$ and so $G(D)$ is $2$-connected. Then, it follows from Lemma~\ref{lemma_fan-lemma} that in $G(D)$ there are two paths $P$, $P'$ from $z$ into the set $\{u,v,w\}$ whose only common vertex is $z$ and whose internal vertices are not in $\{u,v,w\}$. Note that the union $P \cup P'$ together with either one or both of the edges $\{uv, vw\}$ forms a cycle $C'$ in $G(D)$, which contains $v$. Since $\varphi(z) \neq \varphi(u)=\varphi(w)$, $C'$ contains two consecutive vertices of different colors, say $v_1$ and $v_2$. If $C'$ contains both $u$ and $w$, it follows from Claim~\ref{claim_shifting_consecutive_vertices} that there is a coloring $\varphi^*$ of $D-v$ with $\varphi^*(u)=\varphi(v_1)\neq \varphi(v_2)=\varphi^*(w)$, and we are done. In order to complete the first part of the claim, it remains to consider the case that $C'$ contains only one of $u,w$, say $u$. By symmetry, we may assume $\varphi(v_1) \neq \varphi(w)$. Then, it follows from Claim~\ref{claim_shifting_consecutive_vertices} that there is a coloring $\varphi^*$ of $D-v$ with $\varphi^*(u) = \varphi(v_1) \neq \varphi(w) = \varphi^*(w)$ and so we are done. 

Now let $\varphi^*(u)=k$ and $\varphi^*(w)=\ell$. We claim that the vertices of $C-v$ are colored alternately with $k$ and $\ell$. Otherwise, there are vertices $u_1,u_2$ distinct from $v$ with $u_1u_2 \in E(C)$ and $\{\varphi^*(u_1),\varphi^*(u_2)\} \neq \{k, \ell\}$. By symmetry, we may assume that $\ell \not \in \{\varphi^*(u_1),\varphi^*(u_2)\}$.
Then, by Claim~\ref{claim_shifting_consecutive_vertices}, there is a coloring $\varphi'$ of $D-v$ with $\varphi'(u) = \varphi^*(u_1)$ and $\varphi'(w) = \varphi^*(u_2)$. As $\ell \not \in \{\varphi^*(u_1),\varphi^*(u_2)\}$, and as $u$ and $w$ are the only neighbors of $v$ from $V(C)$ in $G(D)$, we obtain that either
$$d_{D[(\varphi')^{-1}(\{\ell \})] +v}^+(v) \neq d_{D[(\varphi^*)^{-1}(\{\ell \})] +v}^+(v) = f_\ell(v)$$
or
$$d_{D[(\varphi')^{-1}(\{\ell\})] +v}^-(v) \neq  d_{D[(\varphi^*)^{-1}(\{\ell \})] +v}^-(v)=f_\ell(v),$$ 
in contradiction to Claim~\ref{claim_degree_in_D_i}(b). This proves the claim that the vertices of $C-v$ are colored alternately with $k$ and $\ell$. As $\varphi^*(u) \neq \varphi^*(w)$, this implies that $C$ is an odd cycle and so the proof of the claim is complete.
\end{proof2}

As a consequence of the above claim, we obtain that every even cycle in $G(D)$ has at least two chords. Thus, it follows from Lemma~\ref{lemma_gallai-chords} that every block of $G(D)$ is an odd cycle or a complete graph. As $D$ and therefore $G(D)$ itself is a block, we conclude that $G(D)$ is either a cycle of odd length or a complete graph. To complete the proof of Theorem~\ref{theorem_main-result}, we show that both cases are impossible.

\begin{claim} \label{claim_odd-cycle}
$G(D)$ is not an odd cycle. 
\end{claim}
\begin{proof2}
For otherwise, let $v \in V(D)$ be an arbitrary vertex. As $C=G(D)$ is an odd cycle, $C$ obviously has no chords and so it follows from Claim~\ref{claim_no-chords} that there is a coloring $\varphi$ of $D-v$ and indices $k \neq \ell$ from $[1,p]$ so that the vertices of $C-v$ are colored alternately with $k$ and $\ell$. 
%
Then, it follows from Claim~\ref{claim_degree_in_D_i}(b) that 
\begin{align}
\begin{gathered}
f_k(v)=f_\ell(v)=1 \text{ and } f_i(v)=0 \text{ for } i\in [1,p] \setminus \{k,\ell\}\\
\text{and } \{uv,vu,vw,wv\} \subseteq A(D) \text{ where } u \text{ and } w \text{ are the neighbors of } v \text{ in } C .
\end{gathered}
\label{align_type_c}
\end{align}
By shifting the color from the neighbor $u$ of $v$ to $v$ we obtain \eqref{align_type_c} for $u$ instead of $v$. Repeating this argument proves that $(D,f)$ is a hard pair of type (C), a contradiction.
\end{proof2}

It remains to consider the case that $G(D)$ is a complete graph. {Claim \ref{claim_degree_in_D_i}  implies} that $|D| \geq 3$. First we claim that $D$ is bidirected. For otherwise, there are two vertices $u,v \in V(D)$ with $uv \in A(D)$ but $vu \not \in A(D)$. Let $\varphi$ be an $f$-coloring of $D-v$. Then, by Claim~\ref{claim_degree_in_D_i}(a), there is a vertex $w \in V(D) \setminus \{v\}$ with $\varphi(w) \neq \varphi(u)$. As $G(D[\{u,v,w\}])$ is a triangle, it follows from Claim~\ref{claim_shifting_consecutive_vertices}  that swapping the colors of $u$ and $w$ results in another $f$-coloring of $D-v$. Then, by Claim~\eqref{claim_degree_in_D_i}(b), we obtain that $wv \in A(D)$ but $vw \not \in A(D)$. By shifting the color from $w$ to $v$ we conclude from  Claim~\ref{claim_degree_in_D_i}(b) that $uw \in A(D)$ but $wu \not \in A(D)$. Similarly, by instead shifting the color from $u$ to $w$ we conclude from Claim~\eqref{claim_degree_in_D_i}(b)  that $wu \in A(D)$, but $uw \not \in A(D)$, which clearly is impossible. This proves the claim that $D$ is bidirected and so $D$ is a bidirected complete graph. 

Finally, we prove that $(D,f)$ is a hard pair of type (K), leading to the desired contradiction. To this end, let $v \in V(D)$ be an arbitrary vertex, let $(D_1,D_2,\ldots,D_p)$ be an $f$-partition of $D-v$, and let $n_i=|D_i|$. As $D$ is a bidirected complete graph, it then follows from Claim~\ref{claim_non-partitionable_eulerian}(a) that $f_i(v)=n_i$ for all $i \in [1,p]$. Moreover, if $u \in V(D) \setminus \{v\}$, say $u \in V(D_1)$ (by symmetry), then the sequence $(D_1',D_2',\ldots,D_p')$ with $D_1'=(D_1 + v) - u$ and $D_j'=D_j$ for $j \in [2,p]$ is an $f$-partition of $D-u$ (by Claim~\ref{claim_non-partitionable_eulerian}(c)) with $|D_i|'=|D_i|$ for all $i \in [1,p]$ and applying Claim~\ref{claim_non-partitionable_eulerian}(a) to $u$ leads to $f_i(u)=n_i$ for all $i \in [1,p]$. Note that $$n_1 + n_2 + \ldots + n_p = |D_1| + |D_2| + \ldots + |D_p| = |V(D)| - 1$$
and so $(D,f)$ is a hard pair of type (K), contradicting (3). This contradiction completes the proof of Theorem~\ref{theorem_main-result}.
\end{proof}

Since a bidirected graph $D$ is weakly $h$-degenerate if and only if its underlying graph $G(D)$ is strictly $h$-degenerate (\emph{i.e.} in each non-empty subgraph $G'$ of $G$ there is a vertex $v$ with $d_{G'}(v) < h(v)$), the restriction of Theorem~\ref{theorem_main-result} to bidirected graphs gives us also a result regarding strict degeneracy of undirected graphs. This result for undirected graphs was originally proven by Borodin, Kostochka, and Toft in the paper~\cite{BorKosToft}. The proof structure of Claim~\ref{claim_block} is similar to that of the first part of their proof. However, apart from this, our proof leads to another proof for the undirected case that uses completely different methods than the original proof. A major benefit of our proof is that it---contrary to the original proof in~\cite{BorKosToft}--- generalizes easily to the case of {directed multigraphs} respectively multigraphs. Of course, the definition of weak degeneracy and of $f$-partitions also works if we allow multiple arcs between vertices going in the same direction. Also, the definition of hard pair needs only to be generalized slightly. To this end, we need the following term. Let $G$ be a simple graph and let $t \geq 1$ be an integer. Then we denote by $tG$ the graph that results from $G$ by replacing each edge by $t$ parallel edges. Now let $\mathcal{D}$ be a connected multidigraph, and let $f:V(\mathcal{D}) \to \mathbb{N}_0^p$ be a vector function. We say that $(\mathcal{D},f)$ is a \textbf{hard pair}, if either
\begin{itemize}
\item $(\mathcal{D},f)$ is of type (M), or
\item $\mathcal{D}$ is a bidirected $tK_n$ and there are integers $n_1,n_2,\ldots,n_p \geq 0$ with at least two $n_i$ different from zero such that $n_1 + n_2 + \ldots + n_p=n-1$ and $$f(v)=(tn_1,tn_2,\ldots,tn_p) \text{ for all } v \in V(\mathcal{D}) \text{, or}$$
\item $\mathcal{D}$ is a bidirected $tC_n$ with $n$ odd and there are two indices $k \neq \ell$ from the set $[1,p]$ such that for all $v \in V(\mathcal{D})$,we have $f_i(v)=t$ if $i \in \{k,\ell\}$ and $f_i(v)=0$ otherwise, or
\item $(\mathcal{D},f)$ is obtained from two hard pairs via the merging operation.
\end{itemize}
For multidigraphs, we obtain the following theorem.
\begin{theorem}\label{theorem_main-result-multi}
Let $\mathcal{D}$ be a connected multidigraph, let $p \geq 1$, and let $f:V(\mathcal{D}) \to \mathbb{N}_0^p$ be a vector function such that $f_1(v) + f_2(v) + \ldots + f_p(v) \geq \max \{d_{\mathcal{D}}^+(v), d_\mathcal{D}^-(v)\}$ for all $v \in V(\mathcal{D})$. Then, $\mathcal{D}$ is not $f$-partitionable if and only if $(\mathcal{D},f)$ is a hard pair.
\end{theorem}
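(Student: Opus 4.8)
The plan is to prove Theorem~\ref{theorem_main-result-multi} by transporting the entire argument used for Theorem~\ref{theorem_main-result} — that is, Propositions~\ref{prop_digraph_block-hardpair} and~\ref{prop_digraph_f-hard} for the ``if''-direction and the smallest-counterexample argument of Theorem~\ref{theorem_non-partitionable_implies_f-hard} for the ``only if''-direction — to multidigraphs. The reason this should work almost verbatim is that weak $f$-degeneracy and $f$-partitions are defined purely in terms of the in- and out-degrees $d^+$ and $d^-$, both of which are meaningful for multidigraphs, and that the only structural graph tools invoked (the Fan Lemma~\ref{lemma_fan-lemma} and Gallai's Lemma~\ref{lemma_gallai-chords}) are applied to the \emph{simple} underlying graph $G(\mathcal{D})$, which is unaffected by arc multiplicities.

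For the ``if''-direction I would first re-establish the multidigraph analogue of Proposition~\ref{prop_digraph_f-hard}. The block-decomposition via the merging operation (H4) and the reduction to a single block are identical, so it suffices to treat the three base types. Type (M) is literally unchanged. For a bidirected $tK_n$ one checks that an induced subdigraph $\mathcal{D}_i$ on $m$ vertices has $\min\{d^+,d^-\}=t(m-1)$ at every vertex, hence is weakly $tn_i$-degenerate if and only if $|\mathcal{D}_i|\le n_i$; the counting bound $n=\sum_i|\mathcal{D}_i|\le\sum_i n_i=n-1$ then rules out an $f$-partition exactly as before. For a bidirected $tC_n$ with $n$ odd, since $f_i\equiv 0$ for $i\notin\{k,\ell\}$ no vertex may receive a colour outside $\{k,\ell\}$, and a colour class $S$ is weakly $t$-degenerate precisely when $S$ induces no edge of $C_n$ (any edge yields a two-vertex induced segment on which both endpoints have degree $t$, so no vertex has degree $<t$); thus an $f$-partition would amount to a proper $2$-colouring of the odd cycle $C_n$, which is impossible.

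For the ``only if''-direction I would again take a smallest counterexample $(\mathcal{D},f)$ and re-run Claims~\ref{claim_D-v}--\ref{claim_no-chords} together with the shifting machinery. These go through unchanged, since each argument refers only to $d^+,d^-$ and to paths and cycles of $G(\mathcal{D})$; in particular the Fan Lemma and Gallai's Lemma still force $G(\mathcal{D})$ to be an odd cycle or a complete graph. The genuinely new work — and the step I expect to be the main obstacle — is the final extraction of the arc multiplicities. In the simple case every multiplicity equals $1$, so reading off type (C) or (K) was immediate; now one must \emph{prove} that the extremal configuration is uniformly bidirected with a single common multiplicity $t$. For the odd-cycle case this is clean: shifting the alternating $k,\ell$-colouring all the way around the cycle reverses the colour pattern because the length is odd, and comparing the resulting two colourings of $\mathcal{D}-v$ through Claim~\ref{claim_degree_in_D_i}(b) equates the multiplicities of consecutive edges (and forces $m(xy)=m(yx)$ on each edge), yielding $f_k\equiv f_\ell\equiv t$ and a bidirected $tC_n$, as in type (C). For the complete-graph case the difficulty is sharper, since $f_i(v)=\sum_{u\in\mathcal{D}_i}m(vu)$ need not equal $t|\mathcal{D}_i|$ a priori; here I would exploit the Eulerian rigidity of Claim~\ref{claim_non-partitionable_eulerian}(d) on each connected piece $\mathcal{D}_i+v$ together with the swapping operation of Claim~\ref{claim_non-partitionable_eulerian}(c) to show first that $m(xy)=m(yx)$ on every edge and then that all multiplicities share a common value $t$ with $f_i\equiv tn_i$. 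This identifies $(\mathcal{D},f)$ as a bidirected $tK_n$ of type (K) and contradicts the assumption that it is not a hard pair, completing the argument.
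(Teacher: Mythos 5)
Your proposal follows essentially the same route as the paper, which for Theorem~\ref{theorem_main-result-multi} merely observes that Claims 1--7 carry over verbatim to multidigraphs and that only the endgame needs slight adaptation. You in fact supply more detail than the paper does on the two genuinely new points (non-partitionability of the bidirected $tK_n$ and $tC_n$ hard pairs, and the extraction of the uniform multiplicity $t$ by shifting around the odd cycle and swapping in the complete case), and those arguments are sound.
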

By inspecting their proofs, it is easy to check that Claims 1-7 also hold for multidigraphs. Only the remaining part of the proof needs to be changed slightly, but still can easily be done adapting the methods described there. Therefore, we abstain from giving an extra proof.

\section{Applications of Theorem~\ref{theorem_main-result}}
\subsection{Brooks' Theorem for list-colorings of digraphs}
As mentioned in the introduction, Theorem~\ref{theorem_main-result} implies Harutyunyan and Mohar's Theorem~\ref{theorem_harut-mohar-list}~\cite{HaMo11}. Let us recall the theorem for the reader's convenience.
\begin{restatedtheorem*}[Theorem~\ref{theorem_harut-mohar-list} {(Harutyunyan and Mohar, 2011)}]
Let $D$ be a connected digraph, and let $L$ be a list-assignment such that $|L(v)| \geq \max \{d_D^+(v), d_D^-(v)\}$ for all $v \in V(D)$. Suppose that $D$ is not $L$-colorable. Then, the following statements hold:
\begin{itemize}
\item[\upshape (a)] $D$ is Eulerian and $|L(v)|= \max \{d_D^+(v), d_D^-(v)\}$ for all $v \in V(D)$.
\item[\upshape (b)] If $B \in \mathscr{B}(D)$, then $B$ is a directed cycle of length $\geq 2$, or $B$ is a bidirected complete graph, or $B$ is a bidirected cycle of odd length $\geq 5$.
\item[\upshape (c)] For each $B \in \mathscr{B}(D)$ there is a set $\Gamma_B$ of $\Delta^+(B)$ colors such that for every $v \in V(D)$, the sets $\Gamma_B$ with $B \in \mathscr{B}_v(D)$ are pairwise disjoint and $L(v)= \bigcup_{B \in \mathscr{B}_v(D)} \Gamma_B$. \hfill \ensuremath{_\diamond}
\end{itemize}
\end{restatedtheorem*}

\begin{proofof}[Theorem~\ref{theorem_harut-mohar-list}]
Let $D$ and $L$ be as described in the theorem. By using the method from the introduction, we transform the list-coloring problem to that of finding an $f$-partition: Let $\Gamma= \bigcup_{v \in V(D)}L(v)$. By renaming the colors if necessary, we may assume $\Gamma=[1,p]$. Now let $f:V(D) \to \mathbb{N}_0^p$ be the vector function with
$$f_i(v)=
\begin{cases}
1 \text{ if } i \in L(v), \text{ and}\\
0 \text{ if } i \not \in L(v)
\end{cases}
$$
for $i \in [1,p]$. By the definition of $f$, we have $f_1(v) + f_2(v) + \ldots + f_p(v) = |L(v)| \geq \max\{d_D^+(v),d_D^-(v)\}$ for all $v \in V(D)$ and $D$ is not $f$-partitionable. Thus, it follows from Theorem~\ref{theorem_main-result} that $(D,f)$ is a hard pair. Then, statement (a) follows from Proposition~\ref{prop_digraph_f-hard}(a). Moreover, Proposition~\ref{prop_digraph_block-hardpair}(a) implies that for each block $B \in \mathscr{B}$, $(B,f_B)$ is of type (M), (K), or (C), where $f_B$ is the function as defined in Proposition~\ref{prop_digraph_block-hardpair}. Note that if $(B,f_B)$ is of type (M), then from the definition of $f$ it follows that the only non-zero coordinate of $f_B$ is constant $1$ and, hence, $d_B^+(v)=d_B^-(v)=1$ for all $v \in V(B)$. Consequently, $B$ is a directed cycle. Thus, (b) holds true. Statement (c) follows easily from Proposition~\ref{prop_digraph_block-hardpair}(a) and (b).
\end{proofof}

\subsection{The $s$-degenerate dichromatic number}
{In ~\cite{Gol16}}, Golowich introduces a generalization of the dichromatic number as follows. Let $s \geq 1$ be an integer. Then, the \textbf{$s$-degenerate dichromatic number} of a digraph $D$, denoted by $\overr\chi_s(D)$, is the least integer $p$ such that $D$ admits a $p$-partition into weakly $s$-degenerate subdigraphs. Clearly, $\chi_1$ corresponds to the dichromatic number. Note that the $s$-degenerate dichromatic number is the digraph counterpart to the \textbf{point partition number} of an undirected graph, which was introduced by Lick and White~\cite{LiWhi72} and which is also known as the \textbf{$s$-chromatic number}. The point partition number $\chi_s$ of an undirected graph $G$ is usually defined as the minimum number $p$ such that $G$ admits a $p$-partition into $s$-degenerate subgraphs. By setting $f_i \equiv s$ in Theorem~\ref{theorem_main-result}, we easily obtain the following theorem, which was proven for undirected graphs by Mitchem~\cite{Mitch77}.

\begin{theorem} \label{theorem_s-degenerate}
Let $D$ be a connected digraph and let $m=\max_{v \in V(D)} \{d_D^+(v),d_D^-(v)\}$. Moreover, let $s \geq 1$ be an integer and let $p= \lceil \frac{m}{s} \rceil$. If $D$ is neither a bidirected odd cycle, a bidirected complete graph, nor an Eulerian digraph in which every vertex has in- and out-degree $s$, then $\chi_s(D) \leq p$.
\end{theorem}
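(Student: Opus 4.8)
The plan is to deduce the statement from Theorem~\ref{theorem_main-result} by choosing the constant vector function. Put $p=\lceil m/s\rceil$ and let $f:V(D)\to\mathbb{N}_0^p$ be defined by $f_i(v)=s$ for all $i\in[1,p]$ and all $v\in V(D)$. Since weak $f_i$-degeneracy coincides with weak $s$-degeneracy when $f_i$ is the constant $s$, an $f$-partition of $D$ is exactly a $p$-partition into weakly $s$-degenerate subdigraphs, so $\overr{\chi}_s(D)\le p$ whenever $D$ is $f$-partitionable. Moreover $f_1(v)+\dots+f_p(v)=ps=s\lceil m/s\rceil\ge m\ge\max\{d_D^+(v),d_D^-(v)\}$ for every $v$, so the hypothesis of Theorem~\ref{theorem_main-result} holds. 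It therefore suffices to prove that if $(D,f)$ is a hard pair then $D$ is one of the three excluded digraphs; the inequality $\overr{\chi}_s(D)\le p$ then follows by contraposition.

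So suppose $(D,f)$ is a hard pair. The decisive first step is Proposition~\ref{prop_digraph_f-hard}(a), which yields $ps=f_1(v)+\dots+f_p(v)=d_D^+(v)=d_D^-(v)$ for every $v$. Thus the constant choice of $f$ forces $D$ to be Eulerian with every in- and out-degree equal to $ps$, and in particular $m=ps$. If $p=1$ this says precisely that every vertex has in- and out-degree $s$, i.e.\ $D$ is an Eulerian digraph with all in- and out-degrees equal to $s$, which is the third excluded configuration, and that case is settled.

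For $p\ge 2$ I would pass to the block decomposition of Proposition~\ref{prop_digraph_block-hardpair}: each block $B$ carries a function $f_B$ with $(B,f_B)$ of type (M), (K), or (C), and $f(v)=\sum_{B\in\mathscr{B}_v(D)}f_B(v)$. For any vertex $w$ that is non-separating in $D$ and lies in $B$ one has $f_B(w)=f(w)=(s,\dots,s)$, a vector all of whose $p\ge 2$ coordinates are nonzero. This excludes type (M) (which has a single nonzero coordinate); it forces a type-(K) block to satisfy $f_B\equiv(s,\dots,s)$ and hence $d_B^+(u)=d_B^-(u)=|B|-1=ps$ for all $u\in V(B)$; and it forces a type-(C) block to have $s=1$, $p=2$ and $d_B^+(u)=d_B^-(u)=2=ps$ for all $u\in V(B)$. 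So in each surviving type every vertex of $B$ already has in- and out-degree exactly $ps$ inside $B$.

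The main obstacle, and the crux of the argument, is to rule out that $D$ has more than one block; here the $ps$-regularity does the work. If $D$ had a cut vertex, choose a leaf block $B$ of the block-cut tree: it contains a unique cut vertex $c$ of $D$ while all its other vertices are non-separating, so by the previous step $B$ is of type (K) or (C) and $d_B^+(c)=d_B^-(c)=ps$. But $c$ also lies in a second block, which contributes strictly positive additional in- and out-degree, whence $d_D^+(c)>ps=m$, contradicting the regularity established above. Therefore $D$ is a single block, and applying the same type analysis to $B=D$ (all of whose vertices are non-separating) excludes type (M) and leaves type (K) or (C). A type-(K) block is a bidirected complete graph and a type-(C) block is a bidirected odd cycle, so in every branch $D$ is one of the three excluded digraphs, as required. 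The only delicate bookkeeping is tracking the interplay between the constant value $s$, the number of parts $p$, and the three block types; once regularity is in hand, the single cut-vertex degree count closes the argument.
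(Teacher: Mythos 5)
Your proposal is correct and follows exactly the route the paper intends: the paper derives Theorem~\ref{theorem_s-degenerate} by setting $f_i\equiv s$ in Theorem~\ref{theorem_main-result} and leaves the analysis of the resulting hard pairs to the reader, which is precisely the case analysis (via Proposition~\ref{prop_digraph_f-hard}(a) for $ps$-regularity and Proposition~\ref{prop_digraph_block-hardpair} for the block types) that you carry out. The leaf-block degree count that rules out cut vertices is the right way to close the argument, so your write-up is a faithful and complete expansion of the paper's one-line justification.
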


To conclude the paper, let us demonstrate how to obtain a list-version of Theorem~\ref{theorem_s-degenerate}. Given a digraph $D$ and a list-assignment $L$ of $D$, we say that $D$ is $(L,s)$\textbf{-colorable} if $D$ admits an $L$-coloring in which every color class induces a weakly $s$-degenerate subdigraph.

\begin{theorem}
Let $D$ be a connected digraph and let $m=\max_{v \in V(D)} \{d_D^+(v),d_D^-(v)\}$. Moreover, let $s \geq 1$ be an integer and let $L$ be a list-assignment with  $|L(v)| \geq  \frac{m}{s}$ for all $v \in V(H)$. Then, $D$ is not $(L,s)$-colorable if and only if the following two conditions are fulfilled:
\begin{itemize}
\item[\upshape (a)] $D$ is a bidirected complete graph with $|D|-1 \equiv 0 \text{ (mod } s \text{)}$, or $D$ is a bidirected odd cycle and $s=1$, or $D$ is an Eulerian digraph in which every vertex has in-degree and out-degree $s$.
\item[\upshape (b)] There is a color set $\Gamma$ such that $L(v)=\Gamma$ for all $v \in V(D)$ and $|\Gamma|= m/s$.
\end{itemize}
\end{theorem}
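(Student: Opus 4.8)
The plan is to reduce the $(L,s)$-coloring problem to an $f$-partition problem and then invoke Theorem~\ref{theorem_main-result}. Following the transformation from the introduction, but weighting by $s$ instead of $1$, I would set $\Gamma=\bigcup_{v\in V(D)}L(v)=[1,p]$ and define $f_i(v)=s$ if $i\in L(v)$ and $f_i(v)=0$ otherwise. Since a single-vertex subdigraph already fails weak $0$-degeneracy, any $f$-partition must place a vertex $v$ into a part $D_i$ with $f_i(v)=s>0$, i.e.\ with $i\in L(v)$; as each nonempty part then carries the constant weight $s$ on its vertices, $f$-partitions of $D$ correspond bijectively to $(L,s)$-colorings. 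The degree hypothesis of Theorem~\ref{theorem_main-result} holds because $\sum_i f_i(v)=s\,|L(v)|\ge s\cdot(m/s)=m\ge\max\{d_D^+(v),d_D^-(v)\}$ for all $v$. Hence $D$ is not $(L,s)$-colorable if and only if $(D,f)$ is a hard pair, and it remains to prove that, under the hypothesis, $(D,f)$ is a hard pair if and only if (a) and (b) hold.

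The crucial observation for the forward direction is that the \emph{global} bound forces regularity. If $(D,f)$ is a hard pair, Proposition~\ref{prop_digraph_f-hard}(a) gives $d_D^+(v)=d_D^-(v)=\sum_i f_i(v)=s\,|L(v)|\ge m$ for every $v$; since $m=\max_w d_D^+(w)$, this squeezes $d_D^+(v)=d_D^-(v)=m$ and $|L(v)|=m/s$ for all $v$ (so $s\mid m$). I would then invoke Proposition~\ref{prop_digraph_block-hardpair}: each block $B$ carries an $f_B$ of type (M), (K), or (C) with $f(v)=\sum_{B\ni v}f_B(v)$. Using $m$-regularity I rule out cut vertices whenever a block of type (K) or (C) is present, since in such a block every vertex already has $B$-degree $m$ (namely $n-1=m$ for $D(K_n)$, and $2=m$ for a bidirected odd cycle), so a shared vertex would acquire total degree exceeding $m$, which is impossible; hence such a block is all of $D$. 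This yields the first two alternatives of (a), and because $f_B$ is constant on $V(B)$ the lists are equal, giving (b) with $|\Gamma|=m/s$ (for type (C) the values $1\in\{0,s\}$ additionally force $s=1$ and $|\Gamma|=2$).

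The remaining case is that every block is of type (M). Here a non-separating vertex $w$ of a block $B$ with distinguished index $j_B$ satisfies $f_{j_B}(w)=d_B^+(w)=m$ and $|L(w)|=1$, forcing $m=s$; comparing two type-(M) blocks meeting at a cut vertex $c$, if their indices differed then $f(c)$ would have two positive coordinates, contradicting $|L(c)|=m/s=1$, so all blocks share one index $j$ and every list equals $\{j\}$. This is exactly the third alternative of (a) together with (b). For the converse I would simply read off the hard-pair structure from (a) and (b): a bidirected complete graph with all lists $\Gamma$ is a pair of type (K) when $|\Gamma|\ge 2$ and of type (M) when $|\Gamma|=1$ (where $n-1=s$); a bidirected odd cycle with $s=1$ and $|\Gamma|=2$ is of type (C); and an Eulerian digraph with all degrees $s$ and all lists $\{j\}$ is a merge of type-(M) blocks. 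In each case $(D,f)$ is a hard pair, so by Proposition~\ref{prop_digraph_f-hard}(b) it is not $f$-partitionable and $D$ is not $(L,s)$-colorable.

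The main obstacle I anticipate is the bookkeeping in the forward direction: translating the recursive hard-pair structure through Proposition~\ref{prop_digraph_block-hardpair} and verifying that $m$-regularity is genuinely incompatible with merging a type-(K) or type-(C) block onto anything, while it \emph{is} compatible with chains of type-(M) blocks. Isolating early the clean fact that the global hypothesis forces $d_D^+(v)=d_D^-(v)=m$ for every $v$ is what makes the block analysis go through, and I expect that to be the decisive step.
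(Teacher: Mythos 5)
Your proposal follows essentially the same route as the paper: the weighted transformation $f_i(v)=s$ for $i\in L(v)$ and $0$ otherwise, the appeal to Theorem~\ref{theorem_main-result}, the use of Proposition~\ref{prop_digraph_f-hard}(a) to squeeze $d_D^+(v)=d_D^-(v)=m$ and $|L(v)|=m/s$ at every vertex, and the block analysis via Proposition~\ref{prop_digraph_block-hardpair}; the converse direction and the all-blocks-type-(M) case are handled exactly as in the paper. The one step you should tighten is the assertion that every vertex of a type-(K) or type-(C) block $B$ already has $B$-degree $m$ (i.e.\ $n-1=m$, resp.\ $2=m$): this follows from $f(w)=f_B(w)$ only at a vertex $w$ contained in no other block, so it is not immediate for an interior block all of whose vertices are cut vertices, and as stated your argument is slightly circular there. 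The clean fix — and what the paper actually does — is to anchor the argument at an \emph{end-block}, which always has a non-separating vertex: the paper splits on $m=s$ versus $m>s$, notes that $m>s$ gives $|L(v)|\ge 2$ for every $v$, hence two positive coordinates of $f(v)$ everywhere, hence (by Proposition~\ref{prop_digraph_block-hardpair}(b)) no end-block can be of type (M); a non-separating vertex of a type-(K) or type-(C) end-block then has block-degree equal to $m$, the block is regular, and the cut vertex (if any) would exceed degree $m$, so $D$ is a single block. With that rerouting through end-blocks your argument is complete; everything else matches the paper's proof.
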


\begin{proof}
If $m=0$, then $D$ consists of just one vertex and the statement is evident. So assume $m \geq 1$. Let $\Gamma=\bigcup_{v \in V(D)} L(v)$. By renaming the colors if necessary, we can assume that $\Gamma=[1,p]$. Now we define a vector function $f:V(D) \to \mathbb{N}_0^p$ as follows. Let 
$$f_i(v)=\begin{cases}
s \quad \text{if } i \in L(v),\\
0 \quad \text{otherwise}
\end{cases}$$
for $i \in [1,p]$. Then, $D$ is not $(L,s)$-colorable if and only if $D$ is not $f$-partitionable.
By definition, we have $f_1(v) + f_2(v) + \ldots + f_p(v) = s |L(v)| \geq m$ for all $v \in V(D)$ and so $D$ is not $(L,s)$-colorable if and only if $(D,f)$ is a hard pair. (by Theorem~\ref{theorem_main-result}) 

First assume that $D$ and $L$ satisfy (a) and (b). Then it is easy to check that $(D,f)$ is indeed a hard pair and so $D$ is not $(L,s)$-colorable. Now assume that $D$ is not $(L,s)$-colorable. Then $(D,f)$ is a hard pair and from Proposition~\ref{prop_digraph_f-hard}(a) it follows that $m \leq s |L(v)| = d_D^+(v)=d_D^-(v)$ for all $v \in V(D)$ and so $D$ is Eulerian and each vertex $v$ satisfies $d_D^+(v)=d_D^-(v)=m$. If $m=s$, then $|L(v)|=1$ for all $v \in V(D)$ and $D$ is not $(L,s)$-colorable if and only if there is a color $\alpha \in \Gamma$ with $L(v)=\{\alpha\}$ for all $v \in V(D)$, and we are done. So assume $m > s$. Then, $|L(v)| \geq 2$ and so for each vertex $v$ there are two indices $i \neq j$ such that $f_i(v) \geq 1$ and $f_j(v) \geq 1$. Consequently, by Proposition~\ref{prop_digraph_block-hardpair}, no end-block of $D$ is a mono-block. Since every vertex of $D$ has in-degree and out-degree $m$, this implies that $D$ itself is a block and so $D$ is either a bidirected complete graph or a bidirected odd cycle. In the first case, $(D,f)$ is of type (K) and, since every coordinate of $f$ is either $s$ or zero we easily conclude from the definition of hard pair of type (K) that $|D| - 1\equiv 0 \text{ (mod } s \text{)}$. If $(D,f)$ is of type (C), we conclude that $s=1$. Moreover, in both cases the function $f$ is constant and so the list-assignment $L$ is constant, too, and $|L(v)|=m/s$ for all $v \in V(D)$. This completes the proof.
\end{proof}

\section{{Appendix: Constructing  an $f$-partition}}
{The problem of deciding whether}  the dichromatic number of a digraph is at most two is already \NP-complete~\cite{Bokal04}. Since this problem {is the same as deciding whether} there is an $f$-partition for $f\equiv (1,1)$, we cannot hope {to find an efficient  algorithm for deciding, for a given pair $(D,f)$, whether $D$ admits an $f$-partition}.  The good news, however, is that it is possible to deduce a polynomial time algorithm from our proof that, given a pair $(D,f)$ with $f_1(v) + f_2(v) + \ldots + f_p(v) \geq \max\{d_D^+(v),d_D^-(v)\}$ for all $v \in V(D)$, either verifies that $(D,f)$ is a hard pair or returns an $f$-partition of $D$. The algorithm uses the following six subroutines. 

\begin{algorithm}[H]
\caption{Greedy Coloring}\label{algorithm_greedy}
\textbf{Input:} $(D,f,v^*)$ {such that
   $D$ is connected,
  $f_1(v) + f_2(v) + \ldots + f_p(v) \geq \max\{d_D^+(v), d_D^-(v)\}$ for all $v \in V(D)$
  and $f_1(v^*) + f_2(v^*) + \ldots + f_p(v^*) > \min\{ d_D^+(v^*), d_D^-(v^*)\}$.
  }\\
\textbf{Output:} {An} $f$-partition $(D_1,D_2,\ldots,D_p)$ of $D$
\begin{algorithmic}[1]
\State $n := |D|$, $v_n := v^*$
\For{$i=n-1,n-2,\ldots,1$}
\State let $v_i$ be a vertex such that $v_i$ has a neighbor from $\{v_{i+1},v_{i+2}, \ldots, v_n\}$
\EndFor
\State $D_j : =\varnothing$ for $j \in [1,p]$
\For{$i=1,2,\ldots,n$}
\State $D_j:=D_j \cup \{v_i\}$ where $j$ is the minimum integer with \label{step_find_j}\\ 
\hspace*{\algorithmicindent}$f_j(v_i) > \min \{d_{D_j + v_i}^+(v_i), d_{D_j + v_i}^-(v_i)\}$.\\
\EndFor

\State \Return $(D_1,D_2,\ldots,D_p)$.
\end{algorithmic}
\end{algorithm}

\begin{algorithm}[H]
\caption{Greedy Coloring of $D-v$}\label{algorithm_greedy_D-v}
\textbf{Input:} $(D,f,v)$ {where $D$ is connected and} $f_1(v) + f_2(v) + \ldots + f_p(v) = \max\{d_D^+(v), d_D^-(v)\}$ for all $v \in V(D)$\\
\textbf{Output:} {An} $f$-partition $(D_1,D_2,\ldots,D_p)$ of $D-v$
\begin{algorithmic}[1]
\State {Let $K_1,\ldots{},K_q$  be the connected components of $D-v$.} 
\For{{$j=1,2,\ldots{},q$}}
\State {find a vertex $v_{K_j}^*$ with $f_1(v_{K_j}^*) + f_2(v_{K_j}^*) + \ldots + f_p(v_{K_j}^*) > \min\{ d_K^+(v_{K_j}^*), d_K^-(v_{K_j}^*)\}$}
\State {apply Algorithm~\ref{algorithm_greedy} to $(K,f,v_{K_j}^*)$ to obtain an $f$-partition $(D_{{K_j},1},D_{{K_j},2},\ldots,D_{{K_j},p})$ of $K_j$}.
\EndFor
\For{$i=1,2,\ldots,{p}$}
\State {$D_i := \bigcup_{j\in [1,q]}D_{K_j,i}$}
\EndFor
\State \Return $(D_1,D_2,\ldots,D_p)$.
\end{algorithmic}
\end{algorithm}

 The correctness of the first two algorithms follows from Claims~\ref{claim_D-v} and~\ref{claim_non-partitionable_eulerian}. In particular, in step~\ref{step_find_j} of Algorithm~\ref{algorithm_greedy}, we always find such an index $j$ since 

\begin{align*}
\sum_{k \in [1,p]} \min\{d_{D_k + v_i}^+(v_i),d_{D_k+v_i}^-(v_i)\} & \leq \min\{d_{D[\{v_1,v_2,\ldots,v_i\}]}^+(v_i), d_{D[\{v_1,v_2,\ldots,v_i\}]}^-(v_i)\}\\
& < \max \{d_D^+(v_i), d_D^-(v_i)\} \leq f_1(v_i) + f_2(v_i) + \ldots + f_p(v_i).
\end{align*}

{\noindent{}Here the last in-equality follows from the fact that $v_i$ has a neighbor in the set $\{v_{i+1},\ldots{},v_n\}$.}

Algorithm~\ref{algorithm_shifting} describes the shifting procedure that we introduced after Claim~\ref{claim_two-colors}. Given a vertex $v$ and {a neighbor $w$ of $v$}, we first check if we can add $v$ to the $f$-partition in order to obtain the desired $f$-partition of $D$. If this is not the case, {then it follows from Claim
  \ref{claim_non-partitionable_eulerian}(c) that }we can ''swap`` $v$ and {$w$} and obtain an $f$-partition of $D-w$. The idea of the Main Algorithm~\ref{main_algorithm} is to iteratively split end-blocks $B$ together with the appropriate block-function $f_B$ (as in Proposition~\ref{prop_digraph_block-hardpair}) from $D$ and try to partition those separately. If we find an $f_B$-partition of some block $B$, we describe a procedure how to extend this partition to an $f$-partition of the whole digraph.

\begin{algorithm}
\caption{Shifting Procedure}\label{algorithm_shifting}
\textbf{Input:} $(D,f,v,{w},(D_1,D_2,\ldots,D_p))$ where
\begin{algorithmic}[1]
\Statex \textbullet~$D$ is a {E}ulerian block
\Statex \textbullet~{$v$ is a vertex of $D$}
\Statex \textbullet~{$w$ is a neighbor of $v$ in $D$}
\Statex \textbullet~$f_1({u}) + f_2({u}) + \ldots + f_p({u})= d_D^+({u}) = d_D^-({u})$ for all ${u} \in V(D)$
\Statex \textbullet~$(D_1,D_2,\ldots,D_p)$ $f$-partition of $D-v$
\end{algorithmic}
\textbf{Output:} Either {an} $f$-partition  of $D$, or
{an} $f$-partition of $D-w$.\\
\hrule
\begin{algorithmic}[1]
\For{$i=1,2,\ldots,p$}
\If{$\min\{d_{D_i+v}^+(v),d_{D_i+v}^-(v)\} < f_i(v)$} \label{line_shifting_degree}
\State $D_i := D_i + v$
\State \Return {the $f$-partition $(D_1,D_2,\ldots,D_p)$} of $D$ 
\EndIf
\EndFor
\State let $i$ be the index with $w \in V(D_i)$
\State $D_i:=D_i + v - w$
\State \Return {the $f$-partition $(D_1,D_2,\ldots,D_p)$} of $D-w$.
\end{algorithmic}
\end{algorithm}

Algorithm~\ref{algorithm_cycle} deals with the case that the underlying graph {$C=G(B)$} of a block $B$ is a cycle and $(B,f_B)$ is not a hard pair. Then, it follows from the proof of Claim~\ref{claim_odd-cycle} that by repeated application of the shifting procedure we will eventually obtain an $f_B$-partition. {As $(B,f_B)$ is not a hard pair, there are two consecutive vertices $v,w$ on $C$ with $f_B(v) \neq f_B(w)$. By beginning the shifting with the vertex $v$, we ensure that} the algorithm will terminate after at most {$\text{length}(C)+1$} iterations {since then the condition in Algorithm~\ref{algorithm_shifting} line~\ref{line_shifting_degree} will be violated at the latest. This is due to the fact that if $u,u'$ are the neighbors of $v$, respectively $w$ on $C$ distinct from $\{v,w\}$ and $(D_1,D_2,\ldots,D_p)$ is an $f_B$-partition of $D-v$, then $u$ and $u'$ need to belong to different parts of the partition (by Claim~\ref{claim_two-colors}(b) applied to $(D_1,D_2,\ldots,D_p)$ and to the $f$-partition of $D-w$ that results from $(D_1,D_2,\ldots,D_p)$ by swapping $v$ and $w$). Thus, after $\text{length}(C)+1$ iterations, $v$ is again uncolored but the neighbors of $v$ will belong to other partition parts than previous. Hence, Algorithm~\ref{algorithm_cycle} runs in polynomial time.}

\begin{algorithm}[H]
\caption{$f$-partition of a cycle}\label{algorithm_cycle}
\begin{algorithmic}[1]
\Procedure{CyclePartition}{$D,f$}
\State let $v,w$ be {two adjacent vertices of $D$} with $f(v) \neq f(w)$
\State apply Algorithm~\ref{algorithm_greedy_D-v} to $(D,f,v)$
\While{true}
\State apply Algorithm~\ref{algorithm_shifting} to $(D,f,{v,w},(D_1,D_2,\ldots,D_p))$
\If{Algorithm~\ref{algorithm_shifting} returns {an} $f$-partition $(D_1',D_2',\ldots,D_p')$ of $D$}
\State \Return {the $f$-partition} $(D_1',D_2',\ldots,D_p')$ of $D$
\Else  
\State Algorithm~\ref{algorithm_shifting} returns {an} $f$-partition $(D_1',D_2',\ldots,D_p')$ of $D-w$
\State {$u:=$ neighbor of $w$ in $D$ distinct from $v$}, $v:=w$, {$w:=u$}, 
\Statex \hspace*{\algorithmicindent}\hspace*{\algorithmicindent}\hspace*{\algorithmicindent}$(D_1,D_2,\ldots,D_p):=(D_1',D_2',\ldots,D_p')$
\EndIf 
\EndWhile\label{cycle_while}
\EndProcedure
\end{algorithmic}
\end{algorithm}

Algorithm~\ref{algorithm_complete} describes a procedure that settles the case that the underlying graph of a block $B$ is a complete graph. Here, we first need to check if $B$ is bidirected; otherwise, we describe a method how to obtain an $f_B$-partition of the non-bidirected block (lines~$1$ till \ref{step_not-bidirected}). That the procedure works follows from the proof's part after Claim~\ref{claim_odd-cycle}. If $B$ is bidirected, we argue as in the same proof section to get an $f$-partition.

\begin{algorithm}[H]
\caption{$f$-partition of a complete digraph}\label{algorithm_complete}
\begin{algorithmic}[1]
\Procedure{CompleteDigraphPartition}{$D,f$}
\If{there is an arc $vu \in A(D)$ with $uv \not \in A(D)$}
	\State apply Algorithm~\ref{algorithm_greedy_D-v} to $(D,f,v)$ to obtain $f$-partition $(D_1,D_2,\ldots,D_p)$ of $D-v$.
	\If{$\min \{d_{D_j+v}^+(v),d_{D_j+v}^-(v)\} < f_j(v_i)$ for some $j \in [1,p]$}
	\State $D_j:=D_j + v_i$
	\State \Return $f$-partition $(D_1,D_2,\ldots,D_p)$.
	\ElsIf{all but one $D_i$ are empty}
			\State find vertex $w \in V(D)$ with $f_j(w) > 0$ for some $j \neq i$ (if possible, $w=v$).
			\State $D_i:=(D_i+v)-w$, $D_j:=D_j+w$
			\State \Return $(D_1,D_2,\ldots,D_p)$ $f$-partition of $D$
	\Else 
			\State let $i$ be the index with $u \in V(D_i)$. Find $w$ with $w \in V(D_j)$ and $i \neq j$.
			\State $v^*:=v$, $C=G(D)[\{u,v,w\}]$
			\While{true}
			\State apply Algorithm~\ref{algorithm_shifting} to $(D,f,C,v^*,(D_1,D_2,\ldots,D_p)$
			\If{Algorithm~\ref{algorithm_shifting} returns $f$-partition $(D_1',D_2',\ldots,D_p')$ of $D$}
				\State $(D_1,D_2,\ldots,D_p):=(D_1',D_2',\ldots,D_p')$
				\State \Return $(D_1,D_2,\ldots,D_p)$ is $f$-partition of $D$
			\Else  
				\State Algorithm~\ref{algorithm_shifting} returns $f$-partition $(D_1',D_2',\ldots,D_p')$ of $D-w^*$
				\Statex\hspace*{\algorithmicindent}\hspace*{\algorithmicindent}\hspace*{\algorithmicindent}\hspace*{\algorithmicindent}\hspace*{\algorithmicindent}where $w^*$ is the right neighbor of $v^*$ on $C$
				\State $v^*:=w^*$, $(D_1,D_2,\ldots,D_p):=(D_1',D_2',\ldots,D_p')$
			\EndIf 
			\EndWhile
		\EndIf \label{step_not-bidirected}
	\Else ~$D$ is a bidirected complete graph. Let $v \in V(D)$ be arbitrary and apply
\Statex\hspace*{\algorithmicindent}\hspace*{\algorithmicindent}Algorithm~\ref{algorithm_greedy_D-v} to obtain an $f$-partition $(D_1,D_2,\ldots,D_p)$ of $D-v$.
	\State let $n_i=|D_i|$ for $i=1,\ldots,p$.
	\If{$f_i(v) > n_i$ for some $i \in [1,p]$}
		\State $D_i:=D_i + v$
		\State \Return $(D_1,D_2,\ldots,D_p)$ is an $f$-partition of $D$
	\ElsIf{there is a vertex $w$ with $w \in V(D_i)$ $(i \in [1,p])$ and $f_i(w) \geq n_i + 2$}
	 	\State $D_i:=D_i + v$
	 	\State \Return $(D_1,D_2,\ldots,D_p)$ is an $f$-partition of $D$
	\Else
		\State find a vertex $w$ and indices $i \neq j$ with $w \in V(D_i)$ and $f_j(w) > n_j$
		\State $D_i:=(D_i + v) - w$, $D_j:=D_j + w$
		\State \Return $(D_1,D_2,\ldots,D_p)$ is an $f$-partition of $D$
	\EndIf 
	\EndIf
\EndProcedure
\end{algorithmic}
\end{algorithm}

\newpage
Algorithm~\ref{algorithm_block} uses the two previously described procedures in order to find an $f_B$-partition of any block $B$ where $(B,f_B)$ is not a hard pair. We first check if $(B,f_B)$ is a hard pair (which can be done efficiently). If this is not the case, we determine if the underlying graph is a cycle or a complete graph and apply the respective procedures to get an $f_B$-partition. If the block's underlying graph is neither a complete graph nor a cycle, we argue as in Claim~\ref{claim_no-chords}. Note that Cranston and Rabern describe in~\cite{CraRa15} a constructive {proof of} Gallai's Lemma~\ref{lemma_gallai-chords}. In particular, they provide an efficient method how to find an even cycle that has at most one chord if the block is neither a complete graph nor an odd cycle. Moreover, in the worst case we need to shift a lot here but we still stay polynomial in the {size of the block}.

\begin{algorithm} 
\caption{Finding an $f$-partition of a block}\label{algorithm_block}
\textbf{Input:} $(D,f)$ where
\begin{algorithmic}[1]
\Statex \textbullet~$D$ is a Eulerian block, and
\Statex \textbullet~$f_1(v) + f_2(v) + \ldots + f_p(v)= d_D^+(v) = d_D^-(v)$ for all $v \in V(D)$
\end{algorithmic}
\textbf{Output:} Either {that} $(D,f)$ is a hard pair, or {an} $f$-partition $(D_1,D_2,\ldots,D_p)$ of $D$
\\
\hrule
\begin{algorithmic}[1]
\If{$(D,f)$ is a hard pair}
\State \Return $(D,f)$ is a hard pair
\ElsIf{$G(D)$ is a cycle} 
	\State $(D_1,D_2,\ldots,D_p)=$\textsc{CyclePartition($D,f$)}
	\State \Return $(D_1,D_2,\ldots,D_p)$ is an $f$-partition of $D$
\ElsIf{$G(D)$ is a complete graph}
	\State $(D_1,D_2,\ldots,D_p)=$\textsc{CompleteDigraphPartition($D,f)$}
	\State \Return $(D_1,D_2,\ldots,D_p)$ is an $f$-partition of $D$
\Else
	\State find {an} even cycle $C$ of $D(G)$ and vertex $v \in V(C)$ that is not contained in a chord
	\State apply Algorithm~\ref{algorithm_greedy_D-v} to obtain an $f$-partition $(D_1,D_2,\ldots,D_p)$ of $D-v$
	\If{$\min \{d_{D_j+v}^+(v),d_{D_j+v}^-(v)\} < f_j(v_i)$ for some $j \in [1,p]$}
	\State $D_j:=D_j + v_i$
	\State \Return $f$-partition $(D_1,D_2,\ldots,D_p)$.
	\EndIf
	\If{all but one $D_i$ are empty}
		\State find {a} vertex $w$ with $f_j(w)>0$ for some $j \neq i$
		\State $D_i:=D_i - w$, $D_j:=(\{w\},\varnothing)$.
	\EndIf
	\State let $u,w$ be the neighbors of $v$ on $C$
	\If{$u,w$ are contained in the same $D_i$}
		\State find vertex $z$ with $z \in V(D_j)$ and $j \neq i$
		\State find cycle $C'$ in $G(D)$ containing $z$ and the edge $vw$
		\Statex\hspace*{\algorithmicindent}\hspace*{\algorithmicindent}and choose a cyclic ordering of the vertices of $C'$
	\EndIf
	\State $v^*:=v$
	\While{$u,w$ are contained in the same $D_i$ or $v \neq v^*$}
		\State apply Algorithm~\ref{algorithm_shifting} to $(D,f,v^*,C',(D_1,\ldots,D_p))$
		\If{Algorithm~\ref{algorithm_shifting} returns $f$-partition $(D_1,D_2,\ldots,D_p)$ of $D$}
		\State \Return $(D_1,D_2,\ldots,D_p)$ is an $f$-partition of $D$
		\Else~$v^*:=$ right neighbor of $v^*$ on $C'$
		\EndIf 
	\EndWhile
	\State choose a cyclic ordering of the vertices of $C$
	\While{true}
	\State apply Algorithm~\ref{algorithm_shifting} to $(D,f,v^*,C,(D_1,\ldots,D_p))$
	\If{Algorithm~\ref{algorithm_shifting} returns $f$-partition $(D_1,D_2,\ldots,D_p)$ of $D$}
		\State \Return $(D_1,D_2,\ldots,D_p)$ is an $f$-partition of $D$
		\Else~$v^*:=$ right neighbor of $v^*$ on $C$
	\EndIf
	\EndWhile
\EndIf
\end{algorithmic}
\end{algorithm}

{Algorithm~\ref{main_algorithm}, which is the main algorithm, first} tries to color the vertices greedily by using Algorithm~\ref{algorithm_greedy}. If this is not possible, it immediately follows that $f_1(v)+f_2(v)+\ldots+f_p(v) = d_D^+(v) = d_D^-(v)$ for all $v \in V(D)$ and, hence, $D$ is Eulerian. Note that this implies that every block is Eulerian, too (see Claim~\ref{claim_block}). We then compute a block-decomposition of $D$ (which can be done efficiently, see \cite{hopcroftACM16}). Afterwards, we take an end-block $B_1$ of $D$ as well as the {unique} separating vertex $v_1$ of $D$ contained in $B_1$ and color the vertices of $D-v_1$ greedily {using} Algorithm~\ref{algorithm_greedy_D-v}. If we cannot add $v_1$ to any of the partition parts, then it follows from Claim~\ref{claim_block} that we can split $f(v_1)$ into $f_{B_1}(v_1)$ and an updated $f(v_1):=f(v_1) - f_{B_1}(v_1)$ as in lines~\ref{step_block-function}-\ref{step_block-function_end}, so that both $(B_1,f_{B_1})$ as well as the remaining part $D \setminus (V(B_1) \setminus\{v_1\})$ together with the new function $f$ fulfill the degree-condition. Then we try to find an $f_{B_1}$-partition of $B_1$, if this is not possible we {record} that $(B_1,f_{B_1})$ is a hard pair. Afterwards, we continue splitting end-blocks from $D$ until we can either confirm that every block together with its corresponding function is a hard pair or we find an $f_{B_i}$-partition of some block $B_i$. Then, it follows from {the proof of} Claim~\ref{claim_block} that we can extend this partition to an $f$-partition of the current $D$. Now we can greedily color the vertices of $B_{i-1} \setminus \{v_{i-1}\}$ and combine this partition with the partition of the previous $D$ in order to get an $f$-partition of $D + B_{i-1}$. By repeating this procedure until $B_1$ gets added to $D$, we eventually obtain the desired $f$-partition.

\begin{algorithm} 
\caption{Main Algorithm}\label{main_algorithm}
\textbf{Input:} $(D,f)$ where
\begin{algorithmic}[1]
\Statex \textbullet~$D$ is a connected digraph
\Statex \textbullet~$f_1(v) + f_2(v) + \ldots + f_p(v) \geq \max\{d_D^+(v),d_D^-(v)\}$ for all $v \in V(D)$
\end{algorithmic}
\textbf{Output:} Either $(D,f)$ is a hard pair, or $f$-partition $(D_1,D_2,\ldots,D_p)$ of $D$
\\
\hrule
\begin{algorithmic}[1]
\If{there is a vertex $v^*$ with $f_1(v^*) + f_2(v^*) + \ldots + f_p(v^*) > \min \{d_D^+(v^*),d_D^-(v^*)\}$}
 	\State apply Algorithm~\ref{algorithm_greedy} to $(D,f,v^*)$ to obtain an $f$-partition $(D_1,D_2,\ldots,D_p)$ of $D$
 	\State \Return $(D_1,D_2,\ldots,D_p)$ is an $f$-partition of $D$
\EndIf
\State Compute {the} block-decomposition of $D$
\State $i:=1$
\While{$D \neq \varnothing$}
\State  let $B_i$ be an end-block of $D$, let $v_i$ be the separating vertex contained in $B_i$.
\State apply Algorithm~\ref{algorithm_greedy_D-v} to $(D,v_i)$ to obtain an $f$-partition of $D-v_i$.
\If{$\min \{d_{D_j+v_i}^+(v_i),d_{D_j+v_i}^-(v_i)\} < f_j(v_i)$ for some $j \in [1,p]$}
	\State $D_j:=D_j + v_i$
	\State go to step~\ref{while_recursive_coloring}
\Else 
	\State $f_{B_i}(v_i):=(f_{B_i,1}(v_i),f_{B_i,2}(v_i),\ldots,f_{B_i,p}(v_i))$ with \label{step_block-function}
	\State $f_{B_i,j}(v_i)=d_{B_i \cap (D_j + v_i)}^+(v_i)$ for $j \in [1,p]$
	\State $f_{B_i}(w)=f(w)$ for $w \in V(B_i) \setminus \{v_i\}$. 
	\State apply Algorithm~\ref{algorithm_block} to $(B_i,f_{B_i})$
	\If{Algorithm~\ref{algorithm_block} returns $f_{B_i}$-partition of $B_i$}
		\State $D_i:=D[(V(D_i)\cap(V(D) \setminus V(B))) \cup V(D_i')]$ for $i \in[1,p]$
		\State go to step~\ref{while_recursive_coloring}
	\Else
	\State $D:=D-(V(B)\setminus\{v_i\})$, $f(v_i):= f(v_i) - f_{B_i}(v_i)$, update block-decomposition of $D$, $i:=i+1$ \label{step_block-function_end}
	\EndIf
\EndIf
\EndWhile
\State \Return $D$ is $f$-hard and results from merging $(B_1,f_{B_1})$,$(B_2,f_{B_2}), \ldots, (B_{i-1},f_{B_{i-1}})$
\While{$i > 1$} \label{while_recursive_coloring}
	\State apply Algorithm~\ref{algorithm_greedy_D-v} on $(B_{i-1}, f_{B_{i-1}}, v_{i-1})$, get an $f_{B_{i-1}}$-partition $(D_1',D_2',\ldots,D_p')$
	\Statex\hspace*{\algorithmicindent} of $B_{i-1}-v_{i-1}$
	\State $D_i:=D[V(D_i) \cup V(D_i')]$ for $i \in [1,p]$
	\State $i := i-1$
\EndWhile
\State \Return $(D_1,D_2,\ldots,D_p)$ is an $f$-partition of $D$. 
\end{algorithmic}
\end{algorithm}
\end{document}